\newtheorem{theorem}{Theorem}[section]
\newtheorem{lemma}[theorem]{Lemma}
\newtheorem{prop}{Proposition}[section]
\newtheorem{definition}{Definition}[section]%
\newtheorem{assumption}{Assumption}[section]
\newtheorem{remark}{Remark}[section]
\numberwithin{equation}{section}
\newcommand\dpath{data}
\newcommand\bbR{\mathbb{R}}
\newcommand\bbS{\mathbb{S}}
\newcommand\bbN{\mathbb{N}}
\newcommand\ak{\left(\sqrt{\varepsilon}\right)}
\newcommand\pd[2]{\dfrac{\partial {#1}}{\partial {#2}}}
\def\+#1{\boldsymbol{#1}}
\newcommand\ang[1]{\left\langle {#1} \right\rangle}
\DeclareDocumentCommand\es{ m g g }{%
	{{#1}%
		\IfNoValueF {#3} {_{#3}}
		\IfNoValueF {#2} { ^{(#2)}}%
    }%
}
\DeclareDocumentCommand\os{ m g g }{%
	{\overline{#1}%
		\IfNoValueF {#3} {_{#3}}
		\IfNoValueF {#2} { ^{(#2)}}%
    }%
}
\DeclareDocumentCommand\vs{ m g g }{%
	{\widehat{#1}%
		\IfNoValueF {#3} {_{#3}}
		\IfNoValueF {#2} { ^{(#2)}}%
    }%
}
\DeclareDocumentCommand\ks{ m g g }{%
	{\widetilde{#1}%
		\IfNoValueF {#3} {_{#3}}
		\IfNoValueF {#2} { ^{(#2)}}%
    }%
}
\title{Construction of boundary conditions for Navier-Stokes equations from the moment system}
\author{
Ruo Li\thanks{School of Mathematical Sciences, Peking University, Beijing 100871, China. Emails:rli@math.pku.edu.cn} \qquad 
Yichen Yang\thanks{School of Mathematical Sciences, Peking University, Beijing 100871, China. Emails:yichen\_yang@pku.edu.cn} \qquad
Yizhou Zhou\thanks{School of Mathematical Sciences, Peking University, Beijing 100871, China. Emails:zhouyz@math.pku.edu.cn}
}
\date{\today}
\begin{document}
\maketitle{}

\begin{abstract}
	This work concerns with boundary conditions (BCs) of the linearized
	moment system for rarefied gases. As the Knudsen number is
	sufficiently small, we analyze the boundary-layer behaviors of the
	moment system by resorting to a three-scale asymptotic expansion.
	The asymptotic analysis casts the flows into the outer solution, 
	the viscous layer and the Knudsen layer.
	Starting from the BCs of the moment system, we propose a matching
	requirement and construct BCs for the Navier-Stokes equations. The
	obtained BCs contain the effect of 
	second-order terms on the velocity slip and temperature jump. For
	the illustrative case of the Couette flow, we prove the validity of the
	constructed BCs through the error estimates. Meanwhile,
	numerical tests are presented to show the performance of the 
	constructed BCs. 
\end{abstract}

\hspace{-0.5cm}\textbf{Keywords:}
\small{Moment system, Boundary conditions, Navier-Stokes equations, Boundary layers}\\


\section{Introduction}
For gases in the low-density regime and microscales, one should
consider kinetic theory of gases, described by the Boltzmann 
equation \cite{CC1989}. Since the Boltzmann equation is 
a problem in high (seven) dimensions, the direct simulation will lead
to much more computational cost than the hydrodynamic equations. 
Grad proposed the famous moment method \cite{Grad1949} to reduce the
kinetic equation into low-dimensional moment systems. These systems 
are first-order partial differential equations, which may be regarded
as intermediate models between the Boltzmann equation and hydrodynamic
equations. Recently, with the development of hyperbolic regularization
\cite{CaiFanLi1D,CaiFanLi,CaiFanLiFramework,FanKoellermeier,
KoellermeierTorrilhon}, the moment method has attracted more attentions 
and become a powerful tool in the simulation of gas flow.

The rarefaction effects of the gas are often characterized by the 
Knudsen number $\varepsilon = \lambda/L$, with $\lambda$ the mean 
free path length and $L$ the relevant characteristic length. The
Euler equations and the Navier-Stokes (NS) equations can
\cite{Struchtrup} be formally derived from the moment system 
for small $\varepsilon$. For the initial value problems, the rigorous
proofs of the derivations are given in
\cite{DiFanLiZheng,MaZhiting,ZhaoYongLuo} by resorting to the 
structural stability criterion \cite{Yong}. It was proved that 
the $\varepsilon$-dependent solution of the moment system converges
to the solution of the Euler equations as $\varepsilon$ goes to zero.
Moreover, the error estimates \cite{MaZhiting} indicate that the
error between the solution of the moment system and the solution 
of the NS equations is of $o(\varepsilon)$, i.e., the error goes 
to zero faster than $\varepsilon$ as $\varepsilon\rightarrow 0$. 
In this sense, the NS equations are called the first-order 
approximation of the moment system. 

For the simulation in a bounded domain, proper boundary conditions (BCs) 
should be prescribed, which is rather challenging for moment equations
\cite{Struchtrup}. Following the basic idea in \cite{Grad1949}, 
the work \cite{CaiLiQiao} provides a method to derive BCs for the 
moment closure system from the Maxwell BCs \cite{Maxwell} for the 
Boltzmann equation. One contribution of the present work is to further
investigate the BCs for the moment system by showing the boundary-layer 
behaviors. We perform an asymptotic analysis on the moment system
with the Maxwell-type BCs. Since the moment system can be regarded as a bridge 
between the Boltzmann equation and the macro dynamic equations, we try to
connect its BCs with the BCs for the kinetic equation and hydrodynamics
models (Euler or NS equations) as well. Motivated by this, we propose a 
matching requirement which helps to construct second-order slip BCs 
for the NS equations.

For the initial-boundary value problems (IBVPs) with small parameters,
it is necessary to consider the effect of boundary layers in the 
asymptotic analysis. The moment system is a typical hyperbolic relaxation
system \cite{DiFanLiZheng,ZhaoYongLuo}. We refer to the general theory 
of IBVPs for the relaxation systems \cite{Yong2,ZhouYong2,ZhouYong} and
construct an asymptotic solution $W_\varepsilon$ with three scales of 
spatial variables to the moment system. Specifically, we assume that the
domain is the half-space $\{\+x=(x_1,x_2,x_3)\in\mathbb{R}^3:\ x_2>0\}$ with
the boundary $\{x_2=0\}$. Then $W_\varepsilon$ is constructed by
$$
W_\varepsilon\left(t,\+x^w;x_2\right)
=\os{W}\left(t,\+x^w;x_2\right)
+\vs{W}\left(t,\+x^w;\frac{x_2}{\sqrt{\varepsilon}}\right)
+\ks{W}\left(t,\+x^w;\frac{x_2}{\varepsilon}\right),\qquad
\+x^w=(x_1,x_3).
$$ 
Here $\os{W}$ is called the outer solution while $\vs{W}$ and
$\ks{W}$ are boundary-layer corrections satisfying 
$\vs{W}(t,\+x^w;\infty)=\ks{W}(t,\+x^w;\infty)=0$.
Notice that this construction is consistent with the classical 
knowledge of kinetic theory \cite{CC1989}, which indicates that
the Boltzmann equation with the Maxwell BCs allows two-scales 
boundary layers---the viscous boundary layer (with the length
of $\sqrt{\varepsilon}$) and the Knudsen layer (with the length
of $\varepsilon$). 

Having these, we try to derive BCs for the hydrodynamic equations.
Since the Euler equations are the limiting equations of the moment
system as $\varepsilon\rightarrow 0$, it is natural to require
their BCs to be the so-called reduced BCs (the BCs satisfied by
the relaxation limit) of the moment system. The derivation of 
reduced BCs has been developed in \cite{Yong2,ZhouYong2,ZhouYong} 
for general relaxation systems. In this work, we focus on the 
construction of BCs for the NS equations which are the first-order
approximation to the moment system. According to the classical 
theory \cite{Sone2007}, the NS equations allow boundary layers 
with the length of $\sqrt{\varepsilon}$. Thus we expect the NS
equations, together with their BCs, are satisfactory approximations
for the moment system in the $x_2$ and $x_2/\sqrt{\varepsilon}$ 
spatial scales. These two scales correspond to outer solutions
and viscous boundary layers respectively. In other words, we 
construct the BCs by following:
\begin{itemize}
\item Matching requirement: the solution to the NS equations 
	with constructed BCs approximates 
		$\os{W}(t,\+x^w;x_2)
		+\vs{W}(t,\+x^w;x_2/\sqrt{\varepsilon})$ with an 
		error of $o(\varepsilon)$ for sufficiently small $\varepsilon$.
\end{itemize}
Note that the moment system has more variables than the NS equations.
When we compare the solutions between these two systems, only the
variables of the NS equations (the density $\rho$, the macro 
velocity $\+u$ and the temperature $\theta$) are taken into account.

This paper only deals with the linearized moment system and tries to
construct BCs for the linearized NS equations. In order to identify 
the issues in the construction, we first take the Couette flow as an
illustrative example. In this case, the linearized moment system 
can be decoupled into simplified models. We formulate the BCs for 
the (simplified) moment system and prove their well-posedness by 
checking the strictly dissipative condition \cite{BenzoniSerre}. 
By resorting to formal asymptotic expansions, we show how to determine
each coefficient in the asymptotic solution of the moment system.
Furthermore, the validity of such asymptotic solutions is proved 
through an energy estimate. Having these, we turn to construct BCs
for the (simplified) NS equations. Through an error estimate, we
rigorously prove that the constructed Robin-type BCs indeed satisfy
the aforementioned matching requirement. Moreover, some numerical 
tests validate the performance of the constructed BCs. 

Guided by this example, we proceed to consider the general linear
case. For the linearized moment system, we present a modification
of the Grad BCs such that they are maximal positive, which is 
important for the well-posedness of the symmetric hyperbolic 
system \cite{1960Local,Rauch1985,Paolo1996}. Similar to the 
Couette flow case, we derive the asymptotic equations as well as their
BCs. Since this procedure is parallel to the expansions in 
Sone's generalized slip flow theory \cite{Sone2007}, we compare
the coefficients computed from the moment system with those 
in \cite{2019Hat,Sone2007}. Based on the asymptotic solutions, 
we construct BCs for the linearized NS equations, which include
terms with second-order spatial derivatives. These kind of BCs
for the NS equations are discussed 
in \cite{CC1989,Dei1964,Maxwell,Sr1969}. It seems that the error
estimates for the general systems are not straightforward. This
would be a subject of a future study.

At this point, we would like to briefly review some related works. 
Considering the BCs for the moment system, most work is about the
numerical simulations
\cite{GuEmerson,GuptaTorrilhon,JulianTorrilhon,TorrilhonStruchtrup} 
while the theoretical results are rare. It was shown in
\cite{CaiFanLiQiao,CaiTorrilhon} that the number of derived BCs
for the moment system equals to the number of positive characteristic
speeds. In \cite{ZhaoYongBC}, the authors discuss the BCs for a
linearized four moments system and show the existence of relaxation 
limit. Our paper deals with the general moment system and aims at 
analyzing the boundary-layer behaviors. As for BCs of the NS 
equations, the slip BCs have been extensively studied both in
theory and experiment \cite{CC1989,Tropea2007}. The representative
work \cite{Coron} derived the slip BCs for steady flows with 
stationary boundaries. For time-dependent problems, the work
\cite{Aoki2017} presents a method to derive the slip BCs with
explicit values of the slip coefficients. This method is based
on the analysis of the Knudsen layer in \cite{Sone2007}. Note 
that the slip BCs derived in \cite{Aoki2017} are similar to our
results except for the terms with second-order derivatives. 

The rest of the paper is organized as follows. As a preparation, 
Section \ref{sec:Back} is devoted to reviewing the necessary background
and introducing basic notations. In Section \ref{sec:3}, 
by considering the Couette flow, we illustrate our basic idea through
a simplified model. For the general linearized moment system, the 
asymptotic equations are derived in Section \ref{sec:AA}. Based on
this, the BCs for the NS equations are constructed in Section
\ref{sec:NSBC}. At last, the numerical tests are presented in
Section \ref{sec:ANE}.

\section{Background}
	\label{sec:Back}
	For convenience of readers, we briefly review some basic equations. 
Firstly, we introduce the celebrated linearized Boltzmann equation
(LBE) with the Maxwell BC. Then we present the
linear moment system with Grad's ansatz and the Grad BCs.
At last, we discuss the linearized NS equations
with slip BCs. 

\subsection{LBE with Maxwell BC}
We consider the nondimensional LBE (cf. \cite{CC1989}) 
\begin{equation}\label{eq:Bol}
	\partial_t f + \+\xi \cdot \nabla_{\+x} f = 
	\mathcal{L}[f]/\varepsilon,
\end{equation}
where $f=f(t,\+x,\+\xi)$ denotes the distribution 
function with $t\in\bbR^+$ the time,
$\+x=(x_1,x_2,x_3)\in\Omega\subset\bbR^3$ the spatial coordinates
and $\+\xi=(\xi_1,\xi_2,\xi_3)\in\bbR^3$ the microscopic velocity.
The linearized collision operator $\mathcal{L}$ describes the
interaction between particles and $\varepsilon$ represents the
Knudsen number. In this paper, we assume the Boltzmann equation
is linearized around a Maxwellian at rest:
\begin{equation}\label{eq:mMM}
	\mathcal{M}(\+\xi) = \frac{1}{(2\pi)^{3/2}}
	\exp\left(-\frac{|\+\xi|^2}{2} \right),
\end{equation}
and $\mathcal{L}[f]$ is defined as
\begin{equation*}
	\mathcal{L}[f](t,\+x,\+\xi) = \mathcal{M}(\+\xi)
	\int_{\bbR^3}\!\!\int_{\bbS^{2}}\!
   	\!K[f/\mathcal{M}]\mathcal{M}(\+\xi_*)
	B(|\+\xi-\+\xi_*|,\+\Theta)\,\mathrm{d}\+\Theta
	\mathrm{d}\+\xi_*, 
\end{equation*}
where $B(\cdot,\cdot)$ is a differential cross-section depending on
the potential between gas molecules. The function $K[\cdot]$ 
is defined as
\begin{equation*}
K[\psi](\+\xi,\+\xi_*,\+\Theta)
= \psi(\+\xi_*') + \psi(\+\xi') - \psi(\+\xi_*)
-\psi(\+\xi),
\end{equation*}
where $\+\xi'$ and $\+\xi_*'$ are determined by $\+\xi,\+\xi_*$ and
$\+\Theta$ from the elastic collision process. 

Then the macroscopic variables can be defined as 
\begin{eqnarray}\label{eq:macro1}
\begin{aligned}
	&	\rho = \ang{f},\quad 
	u_i = \ang{\xi_if},\quad
	p_{ij} = \ang{\xi_i\xi_jf},\quad p=\frac{1}{3}\sum_{i=1}^3p_{ii},\\
	& \theta = p-\rho = \ang{\frac{|\+\xi|^2-3}{3}f},\quad 
	\sigma_{ij}=p_{ij}-p\delta_{ij},\quad
	q_i = \ang{\frac{|\+\xi|^2-5}{2}\xi_if},
\end{aligned}
\end{eqnarray}
where
\begin{eqnarray*}
\ang{\cdot} = \int_{\bbR^3}\!\!\cdot\,\mathrm{d}\+\xi.
\end{eqnarray*}
We call $\rho=\rho(t,\+x)$ the density, $\+u=\+u(t,\+x)=(u_1,u_2,u_3)$ the
macroscopic velocity, $\theta=\theta(t,\+x)$ the temperature,
$p_{ij}$ the pressure tensor, $p$ the pressure, $\sigma_{ij}$
the stress tensor and $\+q=(q_1,q_2,q_3)$ the heat flux.

At a simple boundary, i.e., there is no mass flow across it, one 
extensively used BC is the so-called Maxwell BC
\begin{equation}\label{eq:Maxwell}
f(t,\+x,\+\xi) = \left\{{\begin{array}{*{20}l}
f(t,\+x,\+\xi),\qquad\qquad\qquad\qquad\qquad\quad\quad
 	(\+\xi-\+u^w) \cdot \+n\geq 0,\\[3mm]
\chi f^w(t,\+x,\+\xi) + (1-\chi) f(t,\+x,\+\xi^*),
   	~\quad\quad (\+\xi-\+u^w) \cdot \+n < 0,
\end{array}}\right.
\end{equation}
where $\+n=\+n(t,\+x)=(n_1,n_2,n_3)$ is the normal vector pointing out the 
domain and $\chi\in[0,1]$ is the (tangential momentum) accommodation 
coefficient. When $\chi=0$, the BC turns to the specular-reflection
BC, with
\[\+\xi^* = \+\xi - 2[(\+\xi-\+u^w)\cdot \+n] \+n.\]
When $\chi=1$, the BC is called the diffuse-reflection BC, determined
by 
$$
f^w(t,\+x,\+\xi) = \mathcal{M}(\+\xi)\left(
			\rho^w + \+u^w\cdot\+\xi + 
			\theta^w\frac{|\+\xi|^2-3}{2}\right),
$$
where $\+u^w=\+u^w(t,\+x)$ and $\theta^w=\theta(t,\+x)$ are (macro) 
velocity and temperature of the wall. The density $\rho^w$ is chosen
such that the no mass flow condition holds at the wall:
\begin{equation*}
	\int_{\mathbb{R}^3} [(\+\xi-\+u^w)\cdot \+n] f\,\mathrm{d}\+\xi = 0.
\end{equation*}

For simplicity, here and hereafter we assume the boundary is
$\partial\Omega=\{x_2=0\}$ with $\+n=(0,-1,0)$, and 
\[\+u^w\cdot\+n=0.\]

\subsection{Moment system with BCs}
In Grad's framework \cite{Grad1949}, one can take moments on 
both sides of the LBE to obtain an infinite moment system
\begin{equation}\label{eq:wei1}
	\pd{}{t}\ang{f\phi_{\+\alpha}} + \sum_{d=1}^3\pd{}{x_d}
	\ang{\xi_d f\phi_{\+\alpha}}	= \frac{1}{\varepsilon}
	\ang{\mathcal{L}[f]\phi_{\+\alpha}},\ 
	\+\alpha=(\alpha_1,\alpha_2,\alpha_3)\in\bbN^3,
\end{equation}
where $\phi_{\+\alpha}=\phi_{\+\alpha}(\+\xi)$ is the orthonormal 
Hermite polynomial \cite{Grad1949N} with the weight 
function $\mathcal{M}$ given in \eqref{eq:mMM}. Namely,
\[
	\ang{\mathcal{M}\phi_{\+\alpha}\phi_{\+\beta}} =
	\delta_{\+\alpha,\+\beta}.
\]
If we let $\phi_{\+0}=1$, then the above relation will
\cite{Grad1949N} lead to $\phi_{\+e_i}=\xi_i$ and 
\begin{equation}\label{eq:cur}
\xi_d \phi_{\+\alpha} = \sqrt{\alpha_d}\phi_{\+\alpha-\+e_d}
+\sqrt{\alpha_d+1}\phi_{\+\alpha+\+e_d},\quad d=1,2,3,
\end{equation}
where $\+e_i\in\bbN^3$ is the $i$-th standard basis vector. 

Denote the moment variables by
\[w_{\+\alpha}=\ang{f\phi_{\+\alpha}}.\]
By definition, we can immediately relate the low-order moment 
variables with the macroscopic variables defined in \eqref{eq:macro1} as
\begin{eqnarray}\label{eq:macro-var}
	\begin{aligned} &
	\rho=w_{\+0},\quad u_i=w_{\+e_i},\quad \theta=\frac{\sqrt{2}}{3}
		\sum_{i=1}^3w_{2\+e_i},\\ &
	\sigma_{ij}=\sqrt{1+\delta_{ij}}w_{\+e_i+\+e_j}-\theta\delta_{ij},
	\quad q_i = \frac{1}{2}
	\sum_{j=1}^3\sqrt{(\+e_i+2\+e_j)!}w_{\+e_i+2\+e_j},
\end{aligned}
\end{eqnarray}
where $\+\alpha!=\alpha_1!\alpha_2!\alpha_3!$. 

To close the moment system, Grad's moment 
method considers the ansatz
\[f\in\mathrm{span}\{\mathcal{M}\phi_{\+\alpha},\ |\+\alpha|\leq M\}
\quad\Rightarrow\quad f=\mathcal{M}\sum_{|\+\alpha|\leq
M}w_{\+\alpha}\phi_{\+\alpha},\]
where $M$ is a given integer called the moment order and
$|\+\alpha|=\alpha_1+\alpha_2+\alpha_3$. 
Substituting the ansatz into \eqref{eq:wei1} yields  
Grad's $M$-th order moment system
\begin{equation}\label{eq:Grad}
	\pd{W}{t} + \sum_{d=1}^3 A_d\pd{W}{x_d} = -\frac{1}{\varepsilon}QW,
\end{equation}
where $W\in\bbR^N$ with $N=\#\{\+\alpha\in\bbN^3:\ |\alpha|\leq M\}$.
Following the notations in \cite{CaiFanLi}, the vector $W$ and the
matrices $A_d$, $Q$ can be written as
\begin{eqnarray}
	W[\mathcal{N}(\+\alpha)] &=& w_{\+\alpha},\notag\\[2mm]
	A_d[\mathcal{N}(\+\alpha),\mathcal{N}(\+\beta)] &=& \ang{\mathcal{M}
	\xi_d\phi_{\+\alpha}\phi_{\+\beta}} = \label{eq:defofAQ}
	\sqrt{\alpha_d}\delta_{\+\beta,\+\alpha-\+e_d}
+ \sqrt{\alpha_d+1}\delta_{\+\beta,\+\alpha+\+e_d},\\[2mm] \notag
	Q[\mathcal{N}(\+\alpha),\mathcal{N}(\+\beta)] 
	&=& -\ang{\mathcal{L}[\mathcal{M}
	\phi_{\+\beta}]\phi_{\+\alpha}}.
\end{eqnarray}
Here the notation $W[\mathcal{N}(\+\alpha)]$ represents
the $\mathcal{N}(\+\alpha)$-th
element of the vector $W\in\bbR^N$, where 
\[\mathcal{N}: \{
	\+\alpha\in\bbN^3:\ |\+\alpha|\leq M \}
\rightarrow \{1,2,...,N\}\]
is a one to one mapping given in Remark \ref{rem:11}. Analogously,
$A_d[\mathcal{N}(\+\alpha),\mathcal{N}(\+\beta)]$ 
represents the $\mathcal{N}(\+\alpha)$-th row and 
$\mathcal{N}(\+\beta)$-th column of the matrix
$A_d\in\bbR^{N\times N}$. 

\begin{remark}\label{rem:11}
In the definition of $\mathcal{N}$, the multi-index $\+\alpha\in\bbN^3$ 
with the second component $\alpha_2$ even is always ordered before
$\+\beta$ with $\beta_2$ odd. Then the indices are ordered by the
norm, and finally by the anti-lexicographic order. Namely,
	\begin{itemize}
		\item $\mathcal{N}(\+\alpha) < \mathcal{N}(\+\beta)$
			when $\alpha_2$ is even and $\beta_2$ is odd.
		\item When $\alpha_2$ and $\beta_2$ have the same parity, 
			\[\mathcal{N}(\+\alpha) < \mathcal{N}(\+\beta)
			\quad\Leftrightarrow\quad |\+\alpha|<|\+\beta|.\]
		\item When $\alpha_2,\ \beta_2$ have the same parity and
			$|\+\alpha|=|\+\beta|$,
			\[\mathcal{N}(\+\alpha) < \mathcal{N}(\+\beta)
			\quad\Leftrightarrow\quad \exists\ i,\ s.t.\ 
			\alpha_i > \beta_i,\ \text{and}\ 
			\alpha_j=\beta_j,\ \forall j< i.\]
	\end{itemize}
\end{remark}

Now we turn to consider the BCs for the moment system \eqref{eq:Grad}. 
Following Grad's idea \cite{Grad1949}, to ensure the continuity
of BCs when $\chi\rightarrow 0,$ one can test the Maxwell
BC \eqref{eq:Maxwell} by odd polynomials 
(with respect to $\xi_2$) to construct BCs for moment variables.
Noting that the moment system \eqref{eq:Grad} is symmetric hyperbolic
since $A_d$ is symmetric, the correct number of BCs should coincide
with the number of negative eigenvalues of the boundary matrix 
$\sum_{i=1}^3A_in_i = -A_2$. 

To find appropriate test polynomials, we define 
\[\mathbb{I}_e=\{\+\alpha\in\bbN^3:\ |\+\alpha|\leq M,\
\alpha_2 \text{\ even}\},\quad
\mathbb{I}_o=\{\+\alpha\in\bbN^3:\ |\+\alpha|\leq M,\
\alpha_2 \text{\ odd}\},\]
where we suppose $m=\#\mathbb{I}_e$ and $n=N-m=\#\mathbb{I}_o$. 
It was shown in \cite{Grad1949} that the boundary matrix
$-A_2$ has $n$ positive eigenvalues, $n$ negative eigenvalues 
and $m-n$ zero eigenvalues. Hence, Grad selected the test polynomials
$\phi_{\+\alpha}$ with $\+\alpha\in\mathbb{I}_o$. To show the
continuity of fluxes, we prefer to extracting $\xi_2$ from the 
odd polynomials and choosing the test polynomials as 
$\xi_2\phi_{\+\alpha}$ with $\+\alpha\in\mathbb{I}_e$ and
$|\+\alpha|\leq M-1$. According to this equivalent choice,
the Grad BCs read as (see Appendix \ref{app:G} for details)
\begin{equation}\label{eq:Gbc_0}
	E[\hat{\chi}S,M_o](W-b)=0,
\end{equation}
with $\+u\cdot\+n=0$ given by the no mass flow condition.
Here $E\in\bbR^{n\times m}$ is a projection matrix,
$S\in\bbR^{m\times m}$ symmetric positive definite
and $M_o\in\bbR^{m\times n}$ is of full column rank such that
\[A_2=\begin{bmatrix} 0 & M_o \\ M_o^T & 0 \end{bmatrix},\]
all with constant coefficients, details in Appendix \ref{app:G}.
Here 
\[\hat{\chi}=\displaystyle\frac{2\chi}{(2-\chi)\sqrt{2\pi}}\]
and $b[\mathcal{N}(\+\alpha)]=b_{\+\alpha},$
with $b_{\+0}=\rho^w,\ b_{\+e_i}=u_i^w,\ b_{2\+e_i}=\theta^w/\sqrt{2},$
otherwise $b_{\+\alpha}=0.$ 

Despite the extensive numerical applications, the Grad BCs have rare 
well-posed results other than the correct number of BCs. It's shown
\cite{Sarna2018} that the Grad BCs are unstable in the linearized case.
In Section \ref{sec:AA}, we will introduce a modification of the Grad BCs
which is maximal positive. We note that the maximal positive
BC has played a key role in the well-posedness of the linear symmetric
hyperbolic system \cite{1960Local,Osher1975,Rauch1985,Paolo1996}.

\subsection{NS equations with slip BCs}
The linearized NS equations can be obtained from the Chapman-Enskog
expansion of the LBE or moment equations \cite{Grad1958,CC1989}.
The dimensionless equations read as
\begin{align}\label{linearNS}
&\frac{\partial {\rho}}{\partial {t}} + \sum_{d=1}^3 
	\frac{\partial {u}_d}{\partial {x}_d} = 0, \nonumber \\[2mm]
& \frac{\partial {u}_i}{\partial {t}} + \frac{\partial {p}}
	{\partial {x}_i} = \varepsilon\sum_{d=1}^3\pd{}{x_d}\left(\mu
	\left(\pd{u_i}{x_d}+\pd{u_d}{x_i}-\frac{2}{3}\delta_{id}
	\nabla\cdot\+u\right)\right),\\[2mm]
&\frac{3}{2} \frac{\partial {\theta}}{\partial {t}} +
	\sum_{d=1}^3 \frac{\partial {u}_d}{\partial {x}_d} =
	\frac{5}{2}\varepsilon\sum_{d=1}^3\pd{}{x_d}\left(\lambda\pd{\theta}
	{x_d}\right), \nonumber
\end{align}
which can be regarded as linearization around the reference density
$\rho_0=1$, velocity $u_{i0}=0$ and temperature $\theta_0=1$. Here 
the viscosity $\mu$ and the thermal conductivity $\lambda$ are 
constants due to the linearized assumption. 

The NS equations are usually equipped with the no-slip BCs.
In the region $\Omega=\{x_2>0\}$, the no-slip BCs are
\begin{equation}
	u_i=u_i^w,\quad \theta=\theta^w,\quad \text{at\ }x_2=0.
\end{equation}
Due to the evidence of velocity slip and temperature jump in 
experiments (cf. \cite{Tropea2007} and references cited therein), 
the slip BCs are recommended for the NS equations
when the Knudsen number is relatively large. 
At the boundary $x_2=0$, the first-order slip BCs
\cite{Aoki2017} read as 
\begin{align*}
	u_2 &= 0, \\[2mm]
	u_i-u_i^w &= a_0\varepsilon\left(\pd{u_i}{x_2}+
	\pd{u_2}{x_i}\right) +
	a_1\varepsilon\pd{\theta}{x_i},\ i=1,3,\\[2mm]
	\theta-\theta^w &= a_2\varepsilon\pd{\theta}{x_2} 
	+ a_3\varepsilon\pd{u_2}{x_2},
\end{align*}
where $\varepsilon$ means the Knudsen number and the constants
$a_i$ are slip coefficients. 
The first-order slip BCs lose its accuracy 
when the Knudsen number becomes larger (cf. \cite{Guo2014} and
references therein). To extend the application range of the
NS equations, many second-order slip BCs
have been proposed \cite{Maxwell,Sr1969,Dei1964,CC1989}, 
where the slip coefficients have a significant difference in
different literatures \cite{Colin2005}. An example of 
the second-order slip BCs for the tangential
velocity is 
\begin{equation*}
	u_1-u_1^w = a_0\varepsilon\pd{u_1}{\+n} + a_4\varepsilon^2
	\pd{^2u_1}{\+n^2},
\end{equation*}
where $a_4$ is the second-order slip coefficient.

\section{Couette flow: an illustrative example}
\label{sec:3}


\subsection{Simplified moment system}\label{sec3.1}
In this section, we illustrate the basic idea to analyze the
boundary layers and to construct BCs for the NS equations through a
simplified model. To this end, we consider the Couette flow and make
some additional technical assumptions:
\begin{assumption}[Simplified model]\label{asp3.1}
~
\begin{itemize}
	\item[(i)] The flows are driven by the motion of one plate at
		$\{x_2=0\}$. The velocity of the plate is in the
		$x_1$-direction, i.e., $\+u^w=(u^w(t),0,0)$. The gradient in the $x_2$-direction is dominant and terms $\partial/\partial x_1$, $\partial/\partial x_3$ are omitted. Besides, only the velocity $u_1$ in the $x_1$-direction does not equal to zero. 
	\item[(ii)] The collision term is described by the BGK model \cite{BGK}.
	\item[(iii)] The number of moments is even which means the constant $M$ is odd. 
	\item[(iv)] The initial data of the system are prescribed as zero. 
\end{itemize}
\end{assumption}
\begin{remark} Notice that (i) and (iv) are common assumptions for the Couette flow.
Under the standard framework, the flows are often driven by two
	parallel plates. Here, for simplicity, we only consider one plate while the derivation for two plates case is similar. The assumption (ii) is made since the
BGK collision term is relatively simple. We make the
assumption (iii) to avoid the characteristic boundaries for the moment
system which are difficult to deal with. 
\end{remark}
Assumption
\ref{asp3.1} is only used in Section \ref{sec:3} while the general case
without these assumptions is considered in Section \ref{sec:AA} and \ref{sec:NSBC}.
Thanks to Assumption \ref{asp3.1}, the equations for the moments
$w_{\+\alpha}$ with $\+\alpha=\+e_1+k\+e_2~(k=0,1,\cdots,M)$ can be decoupled from the whole moment system \eqref{eq:Grad} (see \cite{Lijun2017,Gu2009}).
For simplicity of notations, we denote $x=x_2$ and $w_k=w_k(t,x)=w_{\+e_1+k\+e_2}$ for $k=0,1,\cdots,M$ in this section. Then the equations for the moments 
$$
W_c=\begin{pmatrix}
W_e\\[2mm]
W_o
\end{pmatrix}\quad \text{with}\quad W_e=(w_{0},w_{2},\cdots,w_{M-1})^T,\quad W_{o}=(w_{1},w_{3},\cdots,w_{M})^T
$$
can be written as
\begin{equation}\label{1}
\partial_tW_c+A_c
\partial_xW_c = -\frac{1}{\varepsilon}Q_cW_c.
\end{equation}
Here the coefficient matrices are 
$$
A_c = 
\begin{pmatrix}
0 & M_c\\[3mm]
M_c^T & 0
\end{pmatrix},\quad
M_c = 
\begin{pmatrix}
~1 & ~~ & ~~ & ~~ & ~~~\\[2mm]
\sqrt{2} & \sqrt{3} & ~~ & ~~ & ~~~\\[2mm]
~~ & \sqrt{4} & \sqrt{5}~ & ~~ & ~~~\\ 
~~ & ~~ & \ddots & \ddots & ~~~\\[2mm] 
~~ & ~~ & ~~ & \sqrt{M-1} & \sqrt{M} 
\end{pmatrix},\quad Q_c= \text{diag}(0,\underbrace{1,\cdots,1}_{M}).
$$
\begin{remark}\label{rem3.1}
From \eqref{eq:macro-var}, we know that $w_0=w_{\+e_1}$ is the
	velocity $u_1$ and $w_1=w_{\+e_1+\+e_2}$ is the stress $\sigma_{12}$. In this section, we denote $u=u_1$ and $\sigma=\sigma_{12}$ for short.
\end{remark}
Next we consider the BCs and the initial data.
Due to (iv) in Assumption \ref{asp3.1}, the initial data are prescribed as
$$
W_c(0,x)=0.
$$ 
On the other hand, the BCs for $W_c$ can also be decoupled from \eqref{eq:Gbc_0} as 
\begin{equation}\label{1b}
B_cW_c(t,0) = b_c(t).
\end{equation}
Here the coefficient matrix $B_c=(\hat{\chi}S_c,M_c)$. 
Note that $S_c$ is one part of the matrix $S$ in \eqref{eq:Gbc_0}. Since we only use the fact that $S_c$ is symmetric positive definite, the specific expression of $S_c$ is omitted. More details about this decoupling procedure can be found in \cite{Lijun2017,Gu2009}.
The right-hand side term in \eqref{1b} is
$$
b_c(t)=\hat{\chi}S_c(u^w(t),\underbrace{0,\cdots,0}_{(M-1)/2})^T.
$$
The initial data and BCs are assumed to be compatible at $(t,x)=(0,0)$. Namely,
$$
B_cW_c(0,0) = b_c(0),
$$
which means $u^w(0)=0$.

As to the IBVP \eqref{1}-\eqref{1b}, we claim that the BCs \eqref{1b}
are strictly dissipative. According to the classical theory of IBVPs 
for hyperbolic systems \cite{BenzoniSerre}, the strictly dissipative 
condition ensures the well-posedness. Moreover, from
\cite{Yong2,ZhouYong2} we know that this condition also
guarantees the existence of zero relaxation limit. 

\begin{prop}
Suppose the coefficient $0<\chi\leq 1$. Then the equations \eqref{1} with BCs \eqref{1b} are well-posed. Moreover, the solution admits a zero relaxation limit as $\varepsilon$ goes to zero.
\end{prop}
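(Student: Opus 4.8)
The plan is to reduce both assertions to a single structural property of the boundary space, namely that the homogeneous version of \eqref{1b} is \emph{strictly dissipative} for the hyperbolic operator in \eqref{1}. Once this is verified, well-posedness of the IBVP \eqref{1}--\eqref{1b} follows from the classical theory in \cite{BenzoniSerre}, and the existence of the zero relaxation limit follows from \cite{Yong2,ZhouYong2}. To set the stage, note that $A_c$ is symmetric and, because the bidiagonal factor $M_c$ has the nonzero diagonal $(1,\sqrt3,\dots,\sqrt M)$ and is therefore invertible, $A_c$ has no zero eigenvalue; its spectrum consists of the signed singular values of $M_c$, i.e.\ $(M+1)/2$ positive and $(M+1)/2$ negative eigenvalues. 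Since the block $M_c$ in $B_c=(\hat\chi S_c,M_c)$ is invertible, $B_c$ has full row rank $(M+1)/2$, the number of positive (incoming) eigenvalues of $A_c$ on the half-space $x>0$; equivalently $\dim\ker B_c=(M+1)/2$ equals the number of negative eigenvalues. Thus the count of prescribed conditions is correct, and it remains to check the sign of the boundary quadratic form.

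The core computation is to evaluate $W_c^TA_cW_c$ on $\ker B_c$. Using the block form one gets the identity
\begin{equation*}
	W_c^TA_cW_c = 2W_e^TM_cW_o .
\end{equation*}
The homogeneous boundary relation $\hat\chi S_cW_e+M_cW_o=0$ gives $M_cW_o=-\hat\chi S_cW_e$, and substituting yields
\begin{equation*}
	W_c^TA_cW_c = -2\hat\chi\, W_e^TS_cW_e .
\end{equation*}
Because $0<\chi\le1$ we have $\hat\chi>0$, and $S_c$ is symmetric positive definite, so $W_e^TS_cW_e\ge c_0|W_e|^2$ for some $c_0>0$; hence the form is bounded above by $-2\hat\chi c_0|W_e|^2\le0$. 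To upgrade this to control of the full vector I would use the invertibility of $M_c$: on $\ker B_c$ we have $W_o=-\hat\chi M_c^{-1}S_cW_e$, so $|W_o|\le C|W_e|$ and therefore $|W_c|^2\le(1+C^2)|W_e|^2$. Combining the two estimates produces a constant $\delta>0$ with
\begin{equation*}
	W_c^TA_cW_c \le -\delta|W_c|^2,\qquad W_c\in\ker B_c ,
\end{equation*}
which is exactly the strict dissipativity (and, since $\dim\ker B_c$ equals the number of negative eigenvalues of $A_c$, the maximality) of the boundary space.

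With strict dissipativity in hand, the standard energy estimate closes: differentiating $\tfrac12\int_0^\infty|W_c|^2\,\mathrm dx$ in time and integrating by parts produces the boundary term $\tfrac12 W_c(t,0)^TA_cW_c(t,0)$, which together with $Q_c\ge0$ and the inequality above is sign-definite, so \cite{BenzoniSerre} yields well-posedness for the inhomogeneous data $b_c(t)$ after the usual reduction to homogeneous boundary data. The same strict dissipativity, combined with the interior relaxation structure already available for the moment system (symmetry of $A_c$, the positive semidefinite $Q_c$ with $\ker Q_c=\mathrm{span}\{e_1\}$, and the coupling condition that no eigenvector of $A_c$ lies in $\ker Q_c$), places the problem within the framework of \cite{Yong2,ZhouYong2} and delivers the zero relaxation limit. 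The one point that genuinely needs care — and where the hypothesis $\chi>0$ is indispensable — is the strictness of the inequality: for $\chi=0$ (specular reflection) one has $\hat\chi=0$, the boundary form $-2\hat\chi W_e^TS_cW_e$ degenerates to zero and dissipativity is lost, whereas for $\chi>0$ the positivity of $S_c$ makes the reduced form strictly negative in $W_e$ and the invertibility of $M_c$ propagates this strictness to the whole of $W_c$.
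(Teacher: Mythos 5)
Your proof is correct, but it takes a genuinely different route from the paper's. The paper works with the quadratic form on all of $\mathbb{R}^{M+1}$: it reduces the strictly dissipative inequality to positive definiteness of $B_c^TB_c-cA_c-c^2I$ and establishes this for small $c>0$ via a congruent transformation, a Schur complement $K$, and the expansion $(\hat{\chi}^2S_c^2-c^2I)^{-1}=\hat{\chi}^{-2}S_c^{-2}+O(c^2)$. You instead test the boundary form only on $\ker B_c$: the block structure gives $W_c^TA_cW_c=2W_e^TM_cW_o$, the homogeneous boundary relation gives $M_cW_o=-\hat{\chi}S_cW_e$, hence $W_c^TA_cW_c=-2\hat{\chi}W_e^TS_cW_e$, and the invertibility of $M_c$ bounds $|W_o|$ by $|W_e|$ so that the form is $\le-\delta|W_c|^2$ on the kernel. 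This is more elementary and makes transparent exactly where $\chi>0$ enters (and why $\chi=0$ fails), whereas the paper's argument is a single linear-algebra verification that directly matches its stated definition. The one step you leave implicit is the passage from negativity on $\ker B_c$ back to the paper's definition, which requires the inequality $y^TA_cy\le-c|y|^2+c^{-1}|B_cy|^2$ for \emph{all} $y$; this follows by the standard decomposition $y=y_0+y_1$ with $y_0\in\ker B_c$, $y_1\in(\ker B_c)^\perp$, Young's inequality on the cross term, and the lower bound $|B_cy|=|B_cy_1|\ge\sigma_{\min}|y_1|$ from the full row rank of $B_c$. That extension is routine, so I regard it as a presentational gap rather than a mathematical one; your dimension count $\dim\ker B_c=(M+1)/2$ and the spectral description of $A_c$ are also correct and supply the maximality needed for the well-posedness and relaxation-limit citations.
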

To prove this proposition, we recall the definition
\begin{definition}[Strictly dissipative condition]
The BCs \eqref{1b} for symmetric hyperbolic system \eqref{1} are referred to as strictly dissipative, if there is a positive constant $c$ such that 
$$
y^TA_cy\leq -c|y|^2+c^{-1}|B_cy|^2
$$
for all $y \in \mathbb{R}^n$.

\end{definition}

\begin{proof}[Proof of Proposition 3.1] 
It suffices to check that the following symmetric matrix is positive definite:
\begin{align*}
B_c^TB_c-cA_c-c^2I
= \begin{pmatrix}
\vs{S}^2-c^2I & (\vs{S}-cI)M_c \\[2mm]
M_c^T(\vs{S}-cI) & M_c^TM_c -c^2 I
\end{pmatrix}.
\end{align*}
Here $\vs{S}=\hat{\chi}S_c$.
By a congruent transformation, we only need to discuss the positiveness of 
\begin{align*}
\begin{pmatrix}
\vs{S}^2-c^2I & 0 \\[2mm]
0 & K
\end{pmatrix}\quad \text{with}\quad K = M_c^TM_c -c^2 I - M_c^T(\vs{S}-cI) \left(\vs{S}^2-c^2I\right)^{-1}(\vs{S}-cI)M_c.
\end{align*}
For sufficiently small $c>0$, we notice that $(\vs{S}^2-c^2I)^{-1}=\vs{S}^{-1}\vs{S}^{-1}+O(c^2)$ and thereby 
$$
K = 2cM_c^T\vs{S}^{-1}M_c + O(c^2).
$$
For $\hat{\chi}>0$, the matrix $\vs{S}$ is positive definite and
	thereby $K$ is positive definite for sufficiently small $c$.
	Consequently, we verify the strictly dissipative condition for the
	BCs.
\end{proof}

\subsection{Asymptotic analysis}
We need to analyze the boundary-layer behavior of \eqref{1} with BCs
\eqref{1b} for sufficiently small $\varepsilon$. To this end, we
recall the theory of hyperbolic relaxation system \cite{Yong2,ZhouYong} and consider the ansatz:
\begin{align}\label{3}
W_c(t,x) = \os{W}_c(t,x) + \vs{W}_c(t,\frac{x}{\sqrt{\varepsilon}}) 
	+ \ks{W}_c(t,\frac{x}{\varepsilon}).
\end{align}
Here $\os{W}_c$ is the outer solution representing the quantities far
away from the boundary, the others are boundary-layer corrections
which satisfy 
$$
\ks{W}_c(t,\infty) = \vs{W}_c(t,\infty) = 0.
$$
We expand these three variables as 
$$
\os{W}_c = \sum_{j=0}^{\infty} (\sqrt{\varepsilon})^j ~ \os{W}_c^{(j)},\quad \vs{W}_c = \sum_{j=0}^{\infty} (\sqrt{\varepsilon})^j ~ \vs{W}_c^{(j)},\quad \ks{W}_c = \sum_{j=0}^{\infty} (\sqrt{\varepsilon})^j ~ \ks{W}_c^{(j)}.
$$
For each $j$, the component-wise form for the coefficient reads as 
\begin{align*}
\os{W}_c^{(j)} = \left(\os{u}^{(j)},\os{w}_2^{(j)}\cdots,\os{w}_{M-1}^{(j)},\os{\sigma}^{(j)},\os{w}_3^{(j)}\cdots,\os{w}_M^{(j)}\right)^T,\\[2mm]
\vs{W}_c^{(j)} = \left(\vs{u}^{(j)},\vs{w}_2^{(j)}\cdots,\vs{w}_{M-1}^{(j)},\vs{\sigma}^{(j)},\vs{w}_3^{(j)}\cdots,\vs{w}_M^{(j)}\right)^T,\\[2mm]
\ks{W}_c^{(j)} = \left(\ks{u}^{(j)},\ks{w}_2^{(j)}\cdots,\ks{w}_{M-1}^{(j)},\ks{\sigma}^{(j)},\ks{w}_3^{(j)}\cdots,\ks{w}_M^{(j)}\right)^T.
\end{align*}
Notice that we have used the notations $u=w_0$ and $\sigma=w_1$ (see Remark \ref{rem3.1}). 


\subsubsection{Equations for coefficients}
\textbf{(1) Outer solution}

The outer solution $\os{W}_c$ should approximately satisfy the equation \eqref{1}.
We substitute $\os{W}_c$ into \eqref{1} and require the system to be satisfied up to $O(\varepsilon^{1/2})$. By comparing coefficients of each order of $\varepsilon$, we obtain
\begin{eqnarray*}
\left\{
\begin{array}{l} 
O(\varepsilon^{-1}):\quad ~~\os{\sigma}^{(0)}=0,\quad \os{w}_k^{(0)}=0,~(k=2,3,\cdots,M),\\[4mm]
O(\varepsilon^{-1/2}):\quad \os{\sigma}^{(1)}=0,\quad \os{w}_k^{(1)}=0,~(k=2,3,\cdots,M),\\[4mm] 
O(\varepsilon^0):\quad ~~~~\partial_t\os{u}^{(0)} = 0, \quad \os{\sigma}^{(2)}=-\partial_x\os{u}^{(0)},\quad \os{w}_k^{(2)}=0,~(k=2,3,\cdots,M),\\[4mm] 
O(\varepsilon^{1/2}):\quad ~~\partial_t\os{u}^{(1)} = 0, \quad 
\os{\sigma}^{(3)}=-\partial_x\os{u}^{(1)}, \quad \os{w}_k^{(3)}=0,~(k=2,3,\cdots,M).
\end{array}
\right.
\end{eqnarray*}
Besides, we require the first equation in \eqref{1} to be satisfied up to $O(\varepsilon)$ which gives 
$$
O(\varepsilon):\quad \partial_t\os{u}^{(2)} + \partial_x\os{\sigma}^{(2)} = 0.
$$
The initial data of the outer solution $\os{W}_c(0,x)$ should be given
according to the initial data of \eqref{1}, which equal to zero by
Assumption \ref{asp3.1} (iv). Then it is not difficult to see from the
above equations that $\os{W}_c(t,x)\equiv 0$.

\noindent \textbf{(2) Viscous layer solution}

Similarly, we require the boundary-layer correction term $\vs{W}_c$ to
satisfy the system \eqref{1} up to $O(\varepsilon^{1/2})$. Let
$y=x/\sqrt{\varepsilon}$. 
Comparing coefficients of each order of $\varepsilon$ yields
\begin{eqnarray}\label{eq:viscous-1}
\left\{
\begin{array}{l} 
O(\varepsilon^{-1}):\quad ~~\vs{\sigma}^{(0)}=0,\quad \vs{w}_k^{(0)}=0,~(k=2,3,\cdots,M), \\[4mm]
O(\varepsilon^{-1/2}):\quad \vs{\sigma}^{(1)} = - \partial_y\vs{u}^{(0)},\quad \vs{w}_k^{(1)}=0,~(k=2,3,\cdots,M),\\[4mm] 
O(\varepsilon^0):\quad \quad ~\partial_t\vs{u}^{(0)} + \partial_y\vs{\sigma}^{(1)} = 0, \quad \vs{\sigma}^{(2)}=-\partial_y\vs{u}^{(1)},\quad \vs{w}_{2}^{(2)}=-\sqrt{2}\partial_y \vs{\sigma}^{(1)}, \\[4mm]
\qquad\qquad\quad~~ \vs{w}_k^{(2)}=0,~(k=3,4,\cdots,M),\\[4mm] 
O(\varepsilon^{1/2}):\quad~~ \partial_t\vs{u}^{(1)} + \partial_y\vs{\sigma}^{(2)} = 0,\quad \vs{\sigma}^{(3)}=-\partial_t\vs{\sigma}^{(1)}-\partial_y\vs{u}^{(2)}-\sqrt{2}\partial_y\vs{w}_2^{(2)},\\[4mm] 
\qquad\qquad\quad~~ \vs{w}_{2}^{(3)}=-\sqrt{2}\partial_y \vs{\sigma}^{(2)},\quad \vs{w}_{3}^{(3)}=-\sqrt{3}\partial_y \vs{w}_2^{(2)}, \quad \vs{w}_k^{(3)}=0,~(k=4,5,\cdots,M).
\end{array}
\right.
\end{eqnarray}
Moreover, we require the first equation in \eqref{1} to be satisfied up to $O(\varepsilon)$ which gives 
$$
O(\varepsilon):\quad \partial_t\vs{u}^{(2)} + \partial_y\vs{\sigma}^{(3)} = 0.
$$

Now we show how to solve the viscous layer solutions from the above equations. Firstly, we solve two parabolic equations 
\begin{equation}\label{eq:vis1}
	\partial_t\vs{u}^{(0)} - \partial_{yy}\vs{u}^{(0)} = 0,\qquad
\partial_t\vs{u}^{(1)} - \partial_{yy}\vs{u}^{(1)} = 0
\end{equation}
to obtain $\vs{u}^{(0)}$ and $\vs{u}^{(1)}$. Then we can obtain $\vs{\sigma}^{(1)}$, $\vs{\sigma}^{(2)}$ and $\vs{w}_2^{(2)}$ by algebraic relations in \eqref{eq:viscous-1}. Having these, we solve a parabolic equation 
\begin{equation}\label{eq:vis2}
\partial_t\vs{u}^{(2)} - \partial_{yy}\vs{u}^{(2)} = \partial_{ty}\vs{\sigma}^{(1)}+\sqrt{2}\partial_{yy}\vs{w}_2^{(2)}
\end{equation}
to obtain $\vs{u}^{(2)}$. At last, by certain algebraic equations in \eqref{eq:viscous-1} we can determine $\vs{\sigma}^{(3)}$, $\vs{w}_{2}^{(3)}$ and $\vs{w}_{3}^{(3)}$. Clearly, the equations \eqref{eq:vis1}-\eqref{eq:vis2} need BCs for $\vs{u}^{(0)}$, $\vs{u}^{(1)}$ and $\vs{u}^{(2)}$. 

\noindent\textbf{(3) Knudsen layer solution} 

Let $z=x/\varepsilon$. We require $\ks{W}_c$ to satisfy the
equation \eqref{1} up to $O(\varepsilon^{1/2})$, which yields 
\begin{eqnarray}\label{eq:knudsen-1}
\begin{array}{l}
O(\varepsilon^{(j-2)/2}):\\[2mm]
(j=0,1,2,3)~
\end{array}
\left\{\begin{array}{l}
\partial_z\ks{\sigma}^{(j)} = -\partial_t\ks{u}^{(j-2)},\quad \partial_z\ks{u}^{(j)}+\sqrt{2}\partial_z\ks{w}_2^{(j)}=-\ks{\sigma}^{(j)}-\partial_t\ks{\sigma}^{(j-2)}, \\[5mm]
\ks{A}_c
\partial_z\begin{pmatrix}
\ks{w}_e^{(j)} \\[2mm]
\ks{w}_o^{(j)}
\end{pmatrix}=-\begin{pmatrix}
\ks{w}_e^{(j)} \\[2mm]
\ks{w}_o^{(j)}
\end{pmatrix}
-\partial_t
\begin{pmatrix}
\ks{w}_e^{(j-2)} \\[2mm]
\ks{w}_o^{(j-2)}
\end{pmatrix}.
\end{array}\right.
\end{eqnarray} 
Here the notations are defined by
$$
\ks{w}_e^{(j)} = 
\begin{pmatrix}
\ks{w}_2^{(j)} \\[2mm]
\ks{w}_4^{(j)} \\[2mm]
\vdots \\[2mm]
\ks{w}_{M-1}^{(j)}
\end{pmatrix},\quad 
\ks{w}_o^{(j)} = 
\begin{pmatrix}
\ks{w}_3^{(j)} \\[2mm]
\ks{w}_5^{(j)} \\[2mm]
\vdots \\[2mm]
\ks{w}_M^{(j)}
\end{pmatrix},\quad
\ks{A}_c=
\begin{pmatrix}
0 & \ks{M}_c \\[2mm]
\ks{M}_c^T & 0
\end{pmatrix},\quad
\ks{M}_c = \begin{pmatrix}
\sqrt{3} & ~~ & ~~ & ~~~\\[2mm]
\sqrt{4} & \sqrt{5}~ & ~~ & ~~~\\ 
~~ & \ddots & \ddots & ~~~\\[2mm] 
~~ & ~~ & \sqrt{M-1} & \sqrt{M} 
\end{pmatrix}.
$$

Next we show the procedure to obtain Knudsen layer solutions from the above equations. For $j=0,1$, we need to solve the ODE systems
\begin{equation}\label{eq:knu1}
\ks{A}_c
\partial_z\begin{pmatrix}
\ks{w}_e^{(0)} \\[2mm]
\ks{w}_o^{(0)}
\end{pmatrix}=-\begin{pmatrix}
\ks{w}_e^{(0)} \\[2mm]
\ks{w}_o^{(0)}
\end{pmatrix},\qquad
\ks{A}_c
\partial_z\begin{pmatrix}
\ks{w}_e^{(1)} \\[2mm]
\ks{w}_o^{(1)}
\end{pmatrix}=-\begin{pmatrix}
\ks{w}_e^{(1)} \\[2mm]
\ks{w}_o^{(1)}
\end{pmatrix}.
\end{equation}
Then it is easy to see that 
\begin{equation}\label{eq:knu-sig-u}
	\ks{\sigma}^{(j)}=0,\quad \ks{u}^{(j)}=-\sqrt{2}\ks{w}_2^{(j)},\quad j=0,1.
\end{equation}
Having these, we proceed to solve 
\begin{equation}\label{eq:knu2}
\ks{A}_c
\partial_z\begin{pmatrix}
\ks{w}_e^{(2)} \\[2mm]
\ks{w}_o^{(2)}
\end{pmatrix}=-\begin{pmatrix}
\ks{w}_e^{(2)} \\[2mm]
\ks{w}_o^{(2)}
\end{pmatrix}
-\partial_t
\begin{pmatrix}
\ks{w}_e^{(0)} \\[2mm]
\ks{w}_o^{(0)}
\end{pmatrix},
\qquad
\ks{A}_c
\partial_z\begin{pmatrix}
\ks{w}_e^{(3)} \\[2mm]
\ks{w}_o^{(3)}
\end{pmatrix}=-\begin{pmatrix}
\ks{w}_e^{(3)} \\[2mm]
\ks{w}_o^{(3)}
\end{pmatrix}
-\partial_t
\begin{pmatrix}
\ks{w}_e^{(1)} \\[2mm]
\ks{w}_o^{(1)}
\end{pmatrix}.
\end{equation}
At last, $\ks{\sigma}^{(2)}$, $\ks{u}^{(2)}$ and $\ks{\sigma}^{(3)}$, $\ks{u}^{(3)}$ are determined by relations 
\begin{equation}\label{eq:knu-sig-u-2}
	\partial_z\ks{\sigma}^{(2)} = -\partial_t\ks{u}^{(0)},\quad \partial_z\ks{u}^{(2)}=-\sqrt{2}\partial_z\ks{w}_2^{(2)}-\ks{\sigma}^{(2)} 
\end{equation}
and 
\begin{equation}\label{eq:knu-sig-u-3}
\partial_z\ks{\sigma}^{(3)} = -\partial_t\ks{u}^{(1)},\quad \partial_z\ks{u}^{(3)}=-\sqrt{2}\partial_z\ks{w}_2^{(3)}-\ks{\sigma}^{(3)}.	 
\end{equation}

To obtain bounded solutions to the ODE systems \eqref{eq:knu1} and \eqref{eq:knu2}, we state
\begin{prop}\label{prop11}
\noindent There is an orthogonal matrix $R$ satisfying  
$$
\ks{A}_cR=R\begin{pmatrix}
\ks{\Lambda}_+ & 0 \\[2mm]
0 & -\ks{\Lambda}_+
\end{pmatrix},\qquad R=\begin{pmatrix}
R_e & R_e \\[2mm]
R_o & -R_o
\end{pmatrix}.
$$ 
Here $R_o$ and $R_e$ are $(\frac{M-1}{2}\times \frac{M-1}{2})$-invertible matrices, $\ks{\Lambda}_+$ is a positive diagonal matrix.
\end{prop}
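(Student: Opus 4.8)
The matrix $\ks{A}_c$ is real symmetric with the block anti-diagonal form $\begin{pmatrix} 0 & \ks{M}_c \\ \ks{M}_c^T & 0 \end{pmatrix}$, so its entire spectral decomposition is governed by the singular value decomposition (SVD) of the off-diagonal block $\ks{M}_c$. The plan is therefore to first establish that $\ks{M}_c$ is nonsingular, and then to build the orthogonal matrix $R$ directly from its singular vectors. Writing $\ell = (M-1)/2$ for the common size of the two blocks (this equality is exactly where Assumption \ref{asp3.1}(iii) that $M$ is odd enters, making both the even block $w_2,\dots,w_{M-1}$ and the odd block $w_3,\dots,w_M$ of size $\ell$), I observe that $\ks{M}_c$ is lower bidiagonal with diagonal entries $\sqrt{3},\sqrt{5},\dots,\sqrt{M}$, all strictly positive. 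Hence $\det\ks{M}_c=\sqrt{3\cdot 5\cdots M}>0$, so $\ks{M}_c$ is invertible and in particular all of its singular values are strictly positive.

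With invertibility in hand, let $\ks{M}_c = U\Sigma V^T$ be an SVD, where $U,V\in\bbR^{\ell\times\ell}$ are orthogonal and $\Sigma=\diag(\sigma_1,\dots,\sigma_\ell)$ is a \emph{positive} diagonal matrix. The key identity is that the columns $u_i$ of $U$ and $v_i$ of $V$ satisfy $\ks{M}_c v_i=\sigma_i u_i$ and $\ks{M}_c^T u_i=\sigma_i v_i$. From these one checks directly that $\frac{1}{\sqrt{2}}\begin{pmatrix} u_i \\ v_i \end{pmatrix}$ is a unit eigenvector of $\ks{A}_c$ for the eigenvalue $+\sigma_i$, while $\frac{1}{\sqrt{2}}\begin{pmatrix} u_i \\ -v_i \end{pmatrix}$ is a unit eigenvector for $-\sigma_i$. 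Collecting these columns, I set $R_e=\frac{1}{\sqrt{2}}U$, $R_o=\frac{1}{\sqrt{2}}V$ and $\ks{\Lambda}_+=\Sigma$, so that
\[
R=\begin{pmatrix} R_e & R_e \\ R_o & -R_o \end{pmatrix}
=\frac{1}{\sqrt{2}}\begin{pmatrix} U & U \\ V & -V \end{pmatrix}.
\]
A short block computation using $\ks{M}_c V=U\Sigma$ and $\ks{M}_c^T U=V\Sigma$ then confirms $\ks{A}_c R=R\,\diag(\ks{\Lambda}_+,-\ks{\Lambda}_+)$, while $R^TR=I$ follows immediately from $U^TU=V^TV=I$. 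Since $U$ and $V$ are orthogonal, $R_e$ and $R_o$ are invertible, as required.

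There is essentially no deep obstacle here: the one point that genuinely must be verified is the nonsingularity of $\ks{M}_c$, which is what guarantees that $\ks{\Lambda}_+$ is strictly positive rather than merely nonnegative—equivalently, that $\ks{A}_c$ has no zero eigenvalue, so that the Knudsen-layer ODE systems \eqref{eq:knu1}--\eqref{eq:knu2} admit the expected decomposition into growing and decaying modes with no neutral directions. Everything else is the standard correspondence between the eigenpairs of a symmetric off-diagonal block matrix and the SVD of its block, together with routine orthogonality checks.
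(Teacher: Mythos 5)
Your proof is correct. The paper actually states Proposition \ref{prop11} without providing any proof, so there is nothing to compare against; your argument supplies the missing details in the natural way. The two essential points are both handled properly: (1) $\ks{M}_c$ is lower bidiagonal with strictly positive diagonal entries $\sqrt{3},\sqrt{5},\dots,\sqrt{M}$, hence invertible, which is what makes $\ks{\Lambda}_+$ strictly positive (no neutral modes in the Knudsen-layer ODEs); and (2) the SVD $\ks{M}_c=U\Sigma V^T$ converts directly into the claimed spectral decomposition of the block anti-diagonal matrix $\ks{A}_c$, with $R_e=U/\sqrt{2}$, $R_o=V/\sqrt{2}$. Your choice of normalization is moreover the one the paper implicitly relies on later: the proof of Lemma \ref{lemma:3-1} uses $2R_eR_e^T=I$ and $\ks{M}_cR_o=R_e\ks{\Lambda}_+$, both of which follow immediately from your construction.
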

\noindent Thanks to this proposition, we can express the moments by characteristic variables
$$
\begin{pmatrix}
\ks{w}_e^{(k)} \\[2mm]
\ks{w}_o^{(k)}
\end{pmatrix}=
\begin{pmatrix}
R_e \\[3mm]
-R_o
\end{pmatrix}
\ks{w}_{+}^{(k)} + 
\begin{pmatrix}
R_e \\[3mm]
R_o
\end{pmatrix}
\ks{w}_{-}^{(k)}.
$$
For bounded solutions, we require $\ks{w}_{+}^{(k)} \equiv 0$ and get 
\begin{equation}\label{eq:knu3}
	\ks{w}_e^{(k)} = R_e\ks{w}_{-}^{(k)}, \quad \ks{w}_o^{(k)} = R_o\ks{w}_{-}^{(k)}.
\end{equation}
The value of $\ks{w}_{-}^{(k)}$ should be determined from the BCs.

\subsubsection{Boundary conditions}\label{sec:3.2.2}

Substituting the asymptotic solution into the BCs \eqref{1b} and matching each order of $\varepsilon$ yield
$$
O(\varepsilon^{j/2})~(j=0,1,2):\quad
(\hat{\chi}S_c, M_c)[\vs{W}_c^{(j)}(t,0)+\ks{W}_c^{(j)}(t,0)] = b_c^{(j)}
$$
with 
$$
b_c^{(0)}=\hat{\chi}S_c(u^w,0,\cdots,0)^T, \quad b_c^{(1)} = b_c^{(2)} = 0.
$$ 
For further discussions, we denote 
$$
M_c = \begin{pmatrix}
~1 & 0 \\[2mm]
~\ks{g} & \ks{M}_c
\end{pmatrix},\quad \ks{g} = (\sqrt{2},\underbrace{0,\cdots,0}_{\frac{M-3}{2}})^T,\quad
\vs{w}_e^{(j)} = (\vs{w}_2^{(j)},\cdots,\vs{w}_{M-1}^{(j)})^T,\quad
\vs{w}_o^{(j)} = (\vs{w}_3^{(j)},\cdots,\vs{w}_{M}^{(j)})^T.
$$ 
Then the above equations can be rewritten as
\begin{equation}\label{eq:bc-1}
O(\varepsilon^{j/2})~(j=0,1,2):\quad
\hat{\chi}S_c
\begin{pmatrix}
\vs{u}^{(j)}+\ks{u}^{(j)} \\[3mm]
\vs{w}_e^{(j)}+\ks{w}_e^{(j)}
\end{pmatrix}
+\begin{pmatrix}
~1 & 0 \\[3mm]
~\ks{g} & \ks{M}_c
\end{pmatrix}
\begin{pmatrix}
\vs{\sigma}^{(j)}+\ks{\sigma}^{(j)} \\[3mm]
\vs{w}_o^{(j)}+\ks{w}_o^{(j)}
\end{pmatrix}=b_c^{(j)}.	
\end{equation}
  
\noindent \textbf{(1) Order $\varepsilon^0$:}

Recall that $\vs{\sigma}^{(0)}=0$ and $\ks{\sigma}^{(0)}=0$. From \eqref{eq:knu-sig-u}, we know that $\ks{u}^{(0)}=-\sqrt{2}\ks{w}_2^{(0)}=-\ks{g}^T\ks{w}_{e}^{(0)}$. Then it follows from \eqref{eq:knu3} that  
$$
\hat{\chi}S_c
\begin{pmatrix}
\vs{u}^{(0)}-\ks{g}^TR_e\ks{w}_{-}^{(0)} \\[2mm]
R_e\ks{w}_{-}^{(0)}
\end{pmatrix}
+\begin{pmatrix}
~1 & 0 \\[3mm]
~\ks{g} & \ks{M}_c
\end{pmatrix}
\begin{pmatrix}
0 \\[3mm]
R_o\ks{w}_{-}^{(0)}
\end{pmatrix}=b_c^{(0)}.
$$  
The unknowns are $\vs{u}^{(0)}$ and $\ks{w}_{-}^{(0)}$. Thus we rewrite the above equation as
\begin{equation}\label{eq:bc-order0}
\left[\hat{\chi}S_c\begin{pmatrix}
~1 & -\ks{g}^TR_e\\[2mm]
~0 & R_e
\end{pmatrix}+\begin{pmatrix}
0 & 0\\[2mm]
0 & \ks{M}_cR_o
\end{pmatrix}\right]
\begin{pmatrix}
\vs{u}^{(0)} \\[2mm]
\ks{w}_{-}^{(0)}
\end{pmatrix}=\hat{\chi}S_c\begin{pmatrix}
u^w \\[2mm]
0
\end{pmatrix}.
\end{equation}
To show the solvability of this equation, we state
\begin{lemma}\label{lemma:3-1}
For $\hat{\chi}>0$, the matrix 
$$
H_M(\chi): = 
\hat{\chi}S_c\begin{pmatrix}
~1 & -\ks{g}^TR_e\\[2mm]
~0 & R_e
\end{pmatrix}+\begin{pmatrix}
0 & 0\\[2mm]
0 & \ks{M}_cR_o
\end{pmatrix}
$$
is invertible. 
\end{lemma}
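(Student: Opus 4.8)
The plan is to show that $H_M(\chi)$, being square, is invertible by proving its kernel is trivial, via an energy-type argument that turns $H_M(\chi)v=0$ into a sum of two manifestly nonnegative quadratic forms. I would write
$$
H_M(\chi)=\hat{\chi}S_cT+P,\qquad T=\begin{pmatrix}1&-\ks{g}^TR_e\\[2mm]0&R_e\end{pmatrix},\qquad P=\begin{pmatrix}0&0\\[2mm]0&\ks{M}_cR_o\end{pmatrix}.
$$
Since $T$ is block upper-triangular with diagonal blocks $1$ and the invertible $R_e$, it is invertible, so proving $v=0$ is equivalent to proving $Tv=0$. First I would take any $v=(a,b^T)^T$ with $H_M(\chi)v=0$ and test this identity against $Tv$, giving
$$
0=(Tv)^TH_M(\chi)v=\hat{\chi}\,(Tv)^TS_c(Tv)+(Tv)^TPv.
$$
The first term is nonnegative because $\hat{\chi}>0$ and $S_c$ is symmetric positive definite, and it vanishes only when $Tv=0$. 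Hence everything reduces to showing the cross term $(Tv)^TPv$ is also nonnegative.

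To handle the cross term I would extract two structural relations. From Proposition \ref{prop11}, comparing the first block column of $\ks{A}_cR=R\,\diag(\ks{\Lambda}_+,-\ks{\Lambda}_+)$ yields $\ks{M}_cR_o=R_e\ks{\Lambda}_+$, so that
$$
Pv=\begin{pmatrix}0\\[2mm]\ks{M}_cR_ob\end{pmatrix}=\begin{pmatrix}0\\[2mm]R_e\ks{\Lambda}_+b\end{pmatrix}.
$$
Next, orthogonality of $R$ gives the block identities $R_e^TR_e+R_o^TR_o=I$ and $R_e^TR_e-R_o^TR_o=0$, whence $R_e^TR_e=R_o^TR_o=\tfrac12 I$. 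Since $Tv=\bigl(a-\ks{g}^TR_eb,\ (R_eb)^T\bigr)^T$, these two facts combine to give
$$
(Tv)^TPv=(R_eb)^T(R_e\ks{\Lambda}_+b)=b^TR_e^TR_e\ks{\Lambda}_+b=\tfrac12\,b^T\ks{\Lambda}_+b\ge 0,
$$
where the inequality uses that $\ks{\Lambda}_+$ is a positive diagonal matrix.

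Putting the pieces together, the tested identity becomes $\hat{\chi}(Tv)^TS_c(Tv)+\tfrac12 b^T\ks{\Lambda}_+b=0$ with both summands nonnegative, so each must vanish; in particular $(Tv)^TS_c(Tv)=0$ forces $Tv=0$, and invertibility of $T$ then gives $v=0$. This shows $\ker H_M(\chi)=\{0\}$, hence invertibility. The main obstacle, and the crux of the argument, is the cross term: a priori $(R_eb)^TR_e\ks{\Lambda}_+b=b^T(R_e^TR_e)\ks{\Lambda}_+b$ is the quadratic form of a product of two positive definite matrices, which need not be sign-definite. What rescues it is the orthogonality of $R$, which collapses $R_e^TR_e$ to the scalar multiple $\tfrac12 I$ and thereby reduces the form to the positive $\tfrac12 b^T\ks{\Lambda}_+b$; recognizing that this normalization is precisely what makes the energy estimate close is the key step.
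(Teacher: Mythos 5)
Your proof is correct and is essentially the paper's argument in disguise: the paper multiplies $H_M(\chi)$ on the right by $\begin{pmatrix}1 & \ks{g}^T\\ 0 & 2R_e^T\end{pmatrix}$, which is exactly $T^{-1}$ for your $T$ (since $R_e^{-1}=2R_e^T$), and observes that the product $\hat{\chi}S_c+\mathrm{diag}(0,2R_e\ks{\Lambda}_+R_e^T)$ is positive definite. Your testing of $H_M(\chi)v=0$ against $Tv$ is precisely the quadratic form of that same product, built from the same two identities $\ks{M}_cR_o=R_e\ks{\Lambda}_+$ and $R_e^TR_e=R_eR_e^T=\tfrac12 I$ from Proposition \ref{prop11}.
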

\begin{proof}
Proposition \ref{prop11} implies that $\ks{M}_cR_o = R_e\ks{\Lambda}_+$ and $2R_eR_e^T=I$. Then we multiply $H_M(\chi)$ with 
$$
\begin{pmatrix}
1 & \ks{g}^T\\[2mm]
0 & 2R_e^T
\end{pmatrix}
$$
from the right to obtain 
$$
\left[\hat{\chi}S_c\begin{pmatrix}
~1 & -\ks{g}^TR_e\\[2mm]
~0 & R_e
\end{pmatrix}+\begin{pmatrix}
0 & 0\\[2mm]
0 & R_e\ks{\Lambda}_+
\end{pmatrix}\right]
\begin{pmatrix}
1 & \ks{g}^T\\[2mm]
0 & 2R_e^T
\end{pmatrix} = \hat{\chi}S_c+\begin{pmatrix}
0 & 0\\[2mm]
0 & 2R_e\ks{\Lambda}_+R_e^T
\end{pmatrix}.
$$
Clearly, the matrix on the right-hand side is positive definite since $S_c$ is positive definite and $\hat{\chi}>0$. Consequently, we get the invertibility stated in lemma.
\end{proof}

Thanks to this lemma, the unique solution to \eqref{eq:bc-order0} is 
\begin{equation}\label{eq:bc-order0-1}
	\vs{u}^{(0)}(t,0) = u^w(t),\quad \ks{w}_{-}^{(0)}(0,t) = 0.
\end{equation}
It follows from \eqref{eq:knu-sig-u} and \eqref{eq:knu3} that $\ks{W}_c^{(0)}\equiv 0$.

\noindent\textbf{(2) Order $\varepsilon^{1/2}$:}

Similarly, we can derive the BCs for $j=1$. 
Recall that $\vs{\sigma}^{(1)} = - \partial_y\vs{u}^{(0)}$, $\ks{\sigma}^{(1)}=0$ and $\ks{u}^{(1)}=-\sqrt{2}\ks{w}_2^{(1)}=-\ks{g}^T\ks{w}_{e}^{(1)}$. Substituting into \eqref{eq:bc-1} and using \eqref{eq:knu3} yields
\begin{equation*}
\left[\hat{\chi}S_c\begin{pmatrix}
~1 & -\ks{g}^TR_e\\[2mm]
~0 & R_e
\end{pmatrix}+\begin{pmatrix}
0 & 0\\[2mm]
0 & \ks{M}_cR_o
\end{pmatrix}\right]
\begin{pmatrix}
\vs{u}^{(1)} \\[2mm]
\ks{w}_{-}^{(1)}
\end{pmatrix}=
\partial_y\vs{u}^{(0)}\begin{pmatrix}
 ~1~ \\[1mm]
 ~\ks{g}~
\end{pmatrix}.	
\end{equation*}
Due to Lemma \ref{lemma:3-1}, this equation is uniquely solvable and we have 
\begin{equation}\label{eq:bc-order1}
	\vs{u}{1}(t,0) = K_M(\chi) \partial_y\vs{u}{0}(t,0)
	\quad \text{with} \quad
	K_M(\chi):= (1,~0)H_M^{-1}\begin{pmatrix}
 ~1~ \\[1mm]
 ~\ks{g}~
\end{pmatrix}.
\end{equation}
By resorting to the proof of Lemma \eqref{lemma:3-1}, we find that 
\begin{align*}
K_M(\chi)= &~ (1,~0)\begin{pmatrix}
1 & \ks{g}^T\\[2mm]
0 & 2R_e^T
\end{pmatrix}
\left[\hat{\chi}S_c+\begin{pmatrix}
0 & 0\\[2mm]
0 & 2R_e\ks{\Lambda}_+R_e^T
\end{pmatrix}\right]^{-1}
\begin{pmatrix}
 ~1~ \\[1mm]
 ~\ks{g}~
\end{pmatrix} \\[2mm]
=&~(1, ~\ks{g}^T) 
\left[\hat{\chi}S_c+\begin{pmatrix}
0 & 0\\[2mm]
0 & 2R_e\ks{\Lambda}_+R_e^T
\end{pmatrix}\right]^{-1}
\begin{pmatrix}
 ~1~ \\[1mm]
 ~\ks{g}~
\end{pmatrix}>0
\end{align*}
since $S_c, \ks{\Lambda}_+$ are positive definite and $\hat{\chi}>0$.

\noindent \textbf{(3) Order $\varepsilon^1$:}

At last, we derive the BCs for $\vs{u}^{(2)}$. Since $\ks{u}^{(0)}=0$, it follows from \eqref{eq:knu-sig-u-2} that $\ks{\sigma}^{(2)} = 0$ and $\ks{u}^{(2)}=-\sqrt{2}\ks{w}_2^{(2)}=-\ks{g}^T\ks{w}_{e}^{(2)}$. Besides, recall that $\vs{\sigma}^{(2)}=-\partial_y\vs{u}^{(1)}$, $\vs{w}_{2}^{(2)}=-\sqrt{2}\partial_y \vs{\sigma}^{(1)}=\sqrt{2}\partial_{yy} \vs{u}^{(0)}$. Due to the equation \eqref{eq:vis1} and \eqref{eq:bc-order0-1}, it follows that 
\begin{equation}\label{eq:couette2edderi}
\vs{w}_{2}^{(2)}(t,0) = \sqrt{2}\partial_{yy} \vs{u}^{(0)} = \sqrt{2}\partial_{t} \vs{u}^{(0)}(t,0) = \sqrt{2}\partial_{t} u^{w}(t).	
\end{equation}
Moreover, by the definition of $\vs{w}_{e}^{(2)}$ and $\ks{g}$ we can write $\vs{w}_{e}^{(2)}=\ks{g}\partial_{t} u_1^{w}(t)$. 
Substituting these relations into \eqref{eq:bc-1} and using \eqref{eq:knu3} yields
\begin{equation*}
\left[\hat{\chi}S_c\begin{pmatrix}
~1 & -\ks{g}^TR_e\\[2mm]
~0 & R_e
\end{pmatrix}+\begin{pmatrix}
0 & 0\\[2mm]
0 & \ks{M}_cR_o
\end{pmatrix}\right]
\begin{pmatrix}
\vs{u}^{(2)} \\[2mm]
\ks{w}_{-}^{(2)}
\end{pmatrix}=
\begin{pmatrix}
 ~1~ \\[1mm]
 ~\ks{g}~
\end{pmatrix}\partial_y\vs{u}^{(1)}
-\hat{\chi}S_c\begin{pmatrix}
 ~0~ \\[1mm]
 ~\ks{g}~
\end{pmatrix}\partial_{t} u^{w}(t).	
\end{equation*}
Thanks to Lemma \ref{lemma:3-1}, we can solve the above algebraic equation to obtain 
\begin{equation}\label{eq:bc-order2-1}
	\vs{u}{2}(t,0) = K_M(\chi) \partial_y\vs{u}{1}(t,0) + J_M(\chi)
	\partial_t u^w(t).
\end{equation}
Here the constant $K_M(\chi)$ is defined in \eqref{eq:bc-order1} and 
$$
J_M(\chi) := -(1,~0)H_M(\chi)^{-1}\hat{\chi}S_c\begin{pmatrix}
0 \\
\ks{g}
\end{pmatrix}.
$$ 

\subsubsection{Validity}
Construct an approximate solution to \eqref{1}:
$$
W_c^{app}=\os{W}_c^{app}+\vs{W}_c^{app}+\ks{W}_c^{app}.
$$
Here $\os{W}_c^{app}=0$ and  
$$
\vs{W}_c^{app}
=\vs{W}_c^{(0)}+\sqrt{\varepsilon}\vs{W}_c^{(1)}+\varepsilon \vs{W}_c^{(2)}+\varepsilon^{2/3}
\left(0,\vs{w}_2^{(3)},\cdots,\vs{w}_{M-1}^{(3)}, \vs{\sigma}^{(3)},\vs{w}_{3}^{(3)}\cdots,\vs{w}_{M}^{(3)}\right)^T
$$
where the coefficients are determined by the equations \eqref{eq:viscous-1} with BCs \eqref{eq:bc-order0-1}-\eqref{eq:bc-order2-1}. 
Moreover, the term $\ks{W}_c^{app}$ is constructed by
$$
\ks{W}_c^{app}
=\ks{W}_c^{(0)}+\sqrt{\varepsilon}\ks{W}_c^{(1)}+\varepsilon \ks{W}_c^{(2)}+\varepsilon^{2/3}\ks{W}_c^{(3)},
$$
where the coefficients are determined by \eqref{eq:knu1}-\eqref{eq:knu-sig-u-3}. The above approximate solutions are truncated from $\vs{W}_c$ and $\ks{W}_c$. 
Notice that the term $\vs{u}^{(3)}$ is not involved in the expression of $\vs{W}_c^{app}$ since it can not be determined from \eqref{eq:viscous-1}.

Substituting the approximate solution $W_c^{app}$ into the equation \eqref{1}, we have 
\begin{align*}
\partial_tW_c^{app}+A_c\partial_xW_c^{app}=-\frac{1}{\varepsilon}Q_cW_c^{app}+
\begin{pmatrix}
0\\[1mm]
\varepsilon \vs{R}
\end{pmatrix} + \varepsilon \ks{R}
\end{align*}
by our construction \eqref{eq:viscous-1} and \eqref{eq:knudsen-1}.
Here the residual $\vs{R}=\vs{R}(x/\sqrt{\varepsilon})$ has $M$ components and $\ks{R}=\ks{R}(x/\varepsilon)$ has $M+1$ components. Moreover, there exists a constant $C$ independent of $\varepsilon$ such that 
$$
\int_{\mathbb{R}^+}\Big|\vs{R}(\frac{x}{\sqrt{\varepsilon}})\Big|^2 dx = \sqrt{\varepsilon} \int_{\mathbb{R}^+}\big|\vs{R}(y)\big|^2 dy \leq C \varepsilon^{1/2}
$$
and
$$
\int_{\mathbb{R}^+}\Big|\ks{R}(\frac{x}{\varepsilon})\Big|^2 dx = \varepsilon \int_{\mathbb{R}^+}\big|\ks{R}(z)\big|^2 dz \leq C \varepsilon.
$$ 
On the other hand, according to the discussion in subsection
\ref{sec:3.2.2}, we substitute $W_c^{app}$ into the BCs \eqref{1b} and
obtain
\begin{equation}
	B_cW_c^{app}(t,0) = \varepsilon^{3/2} h(t),
\end{equation}
where $h(t)$ satisfies $\int_0^Th(t)^2dt\leq C$. At last, the initial data of $W_c^{app}$ are given as zero. 

Next, we show the validity of the approximate solution $W_c^{app}$ by the following theorem:
\begin{theorem}
The approximate solution $W_c^{app}$ and the exact solution $W_c$ to the equation \eqref{1} satisfy the following error estimate for any $t\in[0,T]$:
$$
	\|(W_c^{app}-W_c)(t,\cdot)\|_{L^2(\mathbb{R}^+)} + \left(\int_0^T
	|W_c^{app}(t,0)-W_c(t,0)|^2 dt\right)^{1/2} \leq C(T) \varepsilon^{3/2}.
$$
\end{theorem}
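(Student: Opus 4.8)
The plan is to control the error $E:=W_c^{app}-W_c$ by a weighted energy argument that exploits the dissipative structure of \eqref{1}. Subtracting \eqref{1} from the residual equation satisfied by $W_c^{app}$, the error solves
\begin{equation*}
\partial_t E + A_c \partial_x E = -\frac{1}{\varepsilon}Q_c E + \begin{pmatrix} 0 \\ \varepsilon\vs{R}\end{pmatrix} + \varepsilon\ks{R},
\end{equation*}
with the inhomogeneous boundary relation $B_c E(t,0)=\varepsilon^{3/2}h(t)$ and zero initial data. Since $W_c^{app}$ is assembled from decaying boundary-layer profiles and $W_c$ has finite propagation speed with zero data, $E(t,\cdot)\to 0$ as $x\to\infty$; hence every spatial integral below converges and the flux contribution at $x=\infty$ vanishes.

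First I would take the dot product of the error equation with $E$ and integrate over $x\in\mathbb{R}^+$. The symmetry of $A_c$ turns the flux term into $E^TA_c\partial_xE=\tfrac12\partial_x(E^TA_cE)$, so after integration it leaves only the boundary value $-\tfrac12E(t,0)^TA_cE(t,0)$. The relaxation term supplies the nonnegative dissipation $\tfrac1\varepsilon\int_0^\infty E^TQ_cE\,dx$, which I retain as a reservoir. For the boundary term I invoke the strictly dissipative condition verified in Proposition 3.1, namely $y^TA_cy\le -c|y|^2+c^{-1}|B_cy|^2$; taking $y=E(t,0)$ with $B_cE(t,0)=\varepsilon^{3/2}h(t)$ gives $E(t,0)^TA_cE(t,0)\le -c|E(t,0)|^2+c^{-1}\varepsilon^3|h(t)|^2$, so this term is negative up to an $O(\varepsilon^3)$ forcing that is time-integrable because $\int_0^T|h|^2\,dt\le C$.

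The crux, and the step I expect to be the main obstacle, is matching the two residuals to the correct powers of $\varepsilon$. The Knudsen residual is harmless: its thin $O(\varepsilon)$ support gives $\|\varepsilon\ks{R}\|_{L^2}^2=\varepsilon^2\int_0^\infty|\ks{R}(x/\varepsilon)|^2\,dx\le C\varepsilon^3$, so pairing with $E$ and applying Young's inequality yields $\tfrac12\|E\|_{L^2}^2+O(\varepsilon^3)$, exactly the target order. The viscous residual is delicate: a direct bound gives only $\|(0,\varepsilon\vs{R})^T\|_{L^2}\sim\varepsilon^{5/4}$, far too weak. The key observation is that this residual has vanishing first component and hence lies in the range of the orthogonal projection $Q_c=\diag(0,1,\dots,1)$, so that $E^T(0,\varepsilon\vs{R})^T=(Q_cE)^T(0,\varepsilon\vs{R})^T$. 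Cauchy--Schwarz then bounds its integral by $(\int_0^\infty E^TQ_cE\,dx)^{1/2}\|(0,\varepsilon\vs{R})^T\|_{L^2}$, and Young's inequality splits off a multiple of $\tfrac1\varepsilon\int_0^\infty E^TQ_cE\,dx$---absorbed by the relaxation reservoir---leaving only $O(\varepsilon\cdot\varepsilon^{5/2})=O(\varepsilon^{7/2})$, negligible against $\varepsilon^3$.

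Collecting these estimates produces a differential inequality of the form
\begin{equation*}
\frac{d}{dt}\|E\|_{L^2}^2 + c|E(t,0)|^2 + \frac1\varepsilon\int_0^\infty E^TQ_cE\,dx \le \|E\|_{L^2}^2 + C\varepsilon^3\bigl(1+|h(t)|^2\bigr).
\end{equation*}
Gr\"onwall's lemma with zero initial data and $\int_0^T|h|^2\,dt\le C$ then gives $\|E(t,\cdot)\|_{L^2}^2\le C(T)\varepsilon^3$, which is the first half of the claim. Integrating the same inequality over $[0,T]$ and discarding all nonnegative left-hand terms except the boundary trace yields $\int_0^T|E(t,0)|^2\,dt\le C(T)\varepsilon^3$; taking square roots delivers the boundary-trace estimate. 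Once the range structure of the viscous residual is recognized, the remaining manipulations are routine.
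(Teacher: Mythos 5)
Your proposal is correct and follows essentially the same route as the paper: an $L^2$ energy estimate on the error equation, the strictly dissipative condition of Proposition 3.1 to control the boundary flux with an $O(\varepsilon^3)$ forcing, absorption of the viscous residual (which lives only in the components acted on by $Q_c$) into the $\varepsilon^{-1}$ relaxation dissipation via a weighted Young inequality, a direct $O(\varepsilon^3)$ bound for the Knudsen residual, and Gr\"onwall plus time-integration of the boundary trace. The key observation you single out---pairing $\varepsilon\vs{R}$ with $Q_cE$ (the paper's $E^{II}$) rather than with $E$---is exactly the step the paper uses.
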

\begin{proof}
Denote the error $E(t,x)=W_c^{app}(t,x)-W_c(t,x)$. It satisfies the IBVP:
\begin{eqnarray}\label{eq:errorequation}
\left\{\begin{array}{l}
\partial_t E + A_c\partial_x E = -\dfrac{1}{\varepsilon}Q_cE + \begin{pmatrix}
0\\[1mm]
\varepsilon \vs{R}
\end{pmatrix} + \varepsilon \ks{R},\\[3mm]
B_cE(t,0) = \varepsilon^{3/2} h(t),\\[3mm]
E(0,x) = 0.
\end{array}
\right.
\end{eqnarray} 
Multiplying $E^T(t,x)$ on the left side and integrating over $x\in [0,\infty)$ yields 
\begin{align*}
&\frac{d}{dt}\|E(t,\cdot)\|_{L^2(\mathbb{R}^+)}^2 - E^T(t,0)A_cE(t,0)  \\[2mm]
=& -\dfrac{2}{\varepsilon}\|E^{II}\|_{L^2(\mathbb{R}^+)}^2 + 2\varepsilon \int_{\mathbb{R}^+}(E^{II})^T\vs{R}dx + 2\varepsilon \int_{\mathbb{R}^+} E^T\ks{R} dx, \\[2mm] 
\leq &-\dfrac{2}{\varepsilon}\|E^{II}\|_{L^2(\mathbb{R}^+)}^2 +
	\varepsilon^3 \int_{\mathbb{R}^+}|\vs{R}|^2dx +
	\frac{1}{\varepsilon}\int_{\mathbb{R}^+}|E^{II}|^2dx +
	\varepsilon^2 \int_{\mathbb{R}^+} |\ks{R}|^2 dx +
	\int_{\mathbb{R}^+} |E|^2 dx\\[2mm]
\leq & ~C \varepsilon^3 + \|E(t,\cdot)\|_{L^2(\mathbb{R}^+)}^2.
\end{align*} 
Here the vector $E$ has $M+1$ components and $E^{II}$ represents for the last $M$ components of $E$. Since the BCs satisfy the strictly dissipative condition, it follows that 
$$
E^T(t,0)A_cE(t,0) \leq -c|E(t,0)|^2+c^{-1}\varepsilon^3|h(t)|^2.
$$
At last, by using the Gronwall's inequality, we obtain the estimate stated in the theorem.
\end{proof}

\subsection{Construction of BCs for NS equations}
In this subsection, we consider the simplified NS equation:
\begin{equation}\label{NSeq}
	\partial_tu_{ns}- \varepsilon\partial_{xx}u_{ns} =0.
\end{equation} 
\begin{remark}
In the original NS equations \eqref{linearNS}, the variables are the density $\rho$, the velocities $u_i$ and the temperature $\theta$. According to the discussion in Subsection \ref{sec3.1}, we only consider the velocity in the $x_1$-direction here (denote as $u_{ns}$). Moreover, by Assumption \ref{asp3.1} we can simplify \eqref{linearNS} and obtain \eqref{NSeq}.
\end{remark}

Our main goal is to construct BCs for the NS equation by resorting to the matching requirement stated in the introduction. 
To do this, we collect the coefficients of $\os{u}^{(j)}$ and $\vs{u}^{(j)}$ in \eqref{3} and compare them with the solution $u_{ns}$ to the NS equation. 
Since $\os{u}^{(j)}\equiv 0$, we construct an approximate solution to the NS equation by
$$
u^{a}_{ns}= \vs{u}{0}+\sqrt{\varepsilon}\vs{u}{1}+\varepsilon\vs{u}{2}.
$$
According to the equations of $\vs{u}^{(j)}$ in \eqref{eq:viscous-1}, we check that $u^{a}_{ns}$ satisfies
$$
\partial_tu^{a}_{ns} - \varepsilon\partial_{xx}u^{a}_{ns} = \varepsilon R^{ns}\left(\frac{x}{\sqrt{\varepsilon}}\right).
$$
Here the residual $R^{ns}=R^{ns}(x/\sqrt{\varepsilon})$ satisfies
$$
\int_{\mathbb{R}^+} |R^{ns}|^2\left(\frac{x}{\sqrt{\varepsilon}}\right) dx = \sqrt{\varepsilon}\int_{\mathbb{R}^+} |R^{ns}|^2(y) dy \leq C \sqrt{\varepsilon}. 
$$

On the other hand, we aim to provide a BC for the NS equation so that $u^{a}_{ns}$ approximately satisfies this BC as well. To this end, we construct 
\begin{equation}\label{eq:NSconstructedBC}
u_{ns}(t,0)-\varepsilon K_M(\chi)\partial_x u_{ns}(t,0) = u^w(t)+
	\varepsilon J_M(\chi)\partial_tu^w(t).
\end{equation}
Substituting $u^{a}_{ns}$ into this condition yields 
\begin{align*}
&~u^{a}_{ns}(t,0)-\varepsilon K_M(\chi)\partial_x u^{a}_{ns}(t,0) \\[1mm]
	=&~\vs{u}{0}(t,0) + \sqrt{\varepsilon} \vs{u}{1}(t,0) +
	\varepsilon \vs{u}{2}(t,0) - \varepsilon K_M(\chi)
	\left(\frac{1}{\sqrt{\varepsilon}}\partial_y\vs{u}{0}(t,0) +
	\partial_y\vs{u}{1}(t,0) \right)+ \varepsilon^{3/2}b_{ns}(t) \\[1mm]
=&~u^w(t)+ \varepsilon J_M(\chi)\partial_tu^w(t) + \varepsilon^{3/2}b_{ns}(t),
\end{align*}
where $\displaystyle\int_0^Tb_{ns}^2(t)dt \leq C$.
In order to show that the constructed BCs \eqref{eq:NSconstructedBC} 
fulfill the matching requirement, we state
\begin{theorem}
The solution $u_{ns}$ to the NS equation \eqref{NSeq} and the approximate solution $u^{a}_{ns}$ satisfy the following error estimate for any $t\in[0,T]$:
$$
\|(u_{ns}-u^{a}_{ns})(t,\cdot)\|_{L^2(\mathbb{R}^+)} +
	\left(\int_0^T|u_{ns}(t,0)-u^{a}_{ns}(t,0)|^2dt\right)^{1/2}
	\leq C(T) \varepsilon^{5/4}.
$$
\end{theorem}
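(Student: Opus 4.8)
The plan is to mimic the energy estimate used in the validity theorem for the moment system (the preceding theorem), but now applied to the scalar parabolic NS equation \eqref{NSeq}. Let me set up the error $e(t,x) = u_{ns}(t,x) - u^{a}_{ns}(t,x)$ and record the IBVP it satisfies.

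\textbf{Setting up the error problem.} First I would subtract the equation satisfied by $u^{a}_{ns}$ from \eqref{NSeq}. Since $u_{ns}$ solves $\partial_t u_{ns} - \varepsilon\partial_{xx}u_{ns}=0$ exactly while $u^{a}_{ns}$ solves the same equation up to the residual $\varepsilon R^{ns}(x/\sqrt{\varepsilon})$, the error satisfies
\begin{equation*}
\partial_t e - \varepsilon\partial_{xx}e = -\varepsilon R^{ns}\!\left(\frac{x}{\sqrt{\varepsilon}}\right).
\end{equation*}
The boundary condition for $u_{ns}$ is the constructed Robin condition \eqref{eq:NSconstructedBC}, and $u^{a}_{ns}$ satisfies the same condition up to the $O(\varepsilon^{3/2})$ defect $\varepsilon^{3/2}b_{ns}(t)$ computed just above the statement. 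Hence the error obeys a homogeneous-type Robin boundary condition with a small forcing:
\begin{equation*}
e(t,0) - \varepsilon K_M(\chi)\,\partial_x e(t,0) = -\varepsilon^{3/2} b_{ns}(t),
\end{equation*}
and the initial data $e(0,x)=0$ since both the true and approximate solutions start from zero.

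\textbf{The energy estimate.} The core step is to multiply the error equation by $e$ and integrate over $x\in[0,\infty)$. The parabolic structure gives, after integration by parts,
\begin{equation*}
\frac{1}{2}\frac{d}{dt}\|e(t,\cdot)\|_{L^2(\mathbb{R}^+)}^2 + \varepsilon\|\partial_x e(t,\cdot)\|_{L^2(\mathbb{R}^+)}^2 = \varepsilon\, e(t,0)\,\partial_x e(t,0) - \varepsilon\int_{\mathbb{R}^+} e\, R^{ns}\,dx.
\end{equation*}
The boundary term $\varepsilon\, e(t,0)\partial_x e(t,0)$ is where the constructed BC must be used: since $K_M(\chi)>0$ was established in \eqref{eq:bc-order1}, solving the boundary relation for $\partial_x e(t,0)$ and substituting should yield a favorably-signed (dissipative) contribution $-\tfrac{1}{K_M(\chi)}e(t,0)^2$ plus a controllable cross term involving $b_{ns}$. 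The residual term is handled by Cauchy--Schwarz and Young's inequality together with the given bound $\int_{\mathbb{R}^+}|R^{ns}|^2(x/\sqrt{\varepsilon})\,dx\leq C\sqrt{\varepsilon}$, so that $\varepsilon\int e R^{ns}\,dx \lesssim \|e\|^2 + \varepsilon^2\cdot\sqrt{\varepsilon}$. Collecting terms and applying Gr\"onwall's inequality over $[0,T]$, the squared $L^2$ norm of $e$ is bounded by $C(T)\varepsilon^{5/2}$, and the accumulated boundary trace $\int_0^T e(t,0)^2\,dt$ is bounded by the same order, giving the stated $\varepsilon^{5/4}$ after taking square roots.

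\textbf{The main obstacle.} I expect the delicate point to be the treatment of the boundary term $\varepsilon\, e(t,0)\partial_x e(t,0)$. Unlike the symmetric hyperbolic case where strict dissipativity of $B_c$ directly controls the boundary flux, here one must exploit the specific positive sign of $K_M(\chi)$ to convert the Robin relation into coercivity at $x=0$; the sign analysis establishing $K_M(\chi)>0$ (from the positive-definiteness argument in Lemma \ref{lemma:3-1}) is precisely what makes this work. One must also verify that the $\varepsilon$-powers balance so that the boundary defect $\varepsilon^{3/2}b_{ns}$ contributes at order $\varepsilon^{5/2}$ in the squared estimate rather than spoiling the rate. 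Once the boundary term is shown to be dissipative up to the small $b_{ns}$ forcing, the remainder is a routine Gr\"onwall argument parallel to the proof of the preceding theorem.
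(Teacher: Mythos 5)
Your proposal is correct and follows essentially the same route as the paper: identify the error IBVP with residual $\varepsilon R^{ns}$ and boundary defect $\varepsilon^{3/2}b_{ns}$, multiply by the error and integrate by parts, use the positivity of $K_M(\chi)$ to turn the Robin boundary term into a dissipative contribution $-\tfrac{1}{2K_M}|e(t,0)|^2$ plus an $O(\varepsilon^3)$ forcing, absorb the residual via Young's inequality using $\int_{\mathbb{R}^+}|R^{ns}|^2\,dx\leq C\sqrt{\varepsilon}$, and conclude by Gr\"onwall together with a time integration for the boundary trace. The only discrepancies are immaterial sign conventions in the error equation and defect, and a slight overestimate of the boundary-defect contribution (it enters at order $\varepsilon^3$, dominated by the $\varepsilon^{5/2}$ from the residual), neither of which affects the argument.
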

\begin{proof}
Let $E^{ns} = u_{ns}-u^{a}_{ns}$. The above discussion implies that $E^{ns}$ satisfies the IBVP
$$
\left\{\begin{array}{l}
\partial_tE^{ns} - \varepsilon \partial_{xx} E^{ns} = \varepsilon R^{ns}, \qquad x>0,\\[3mm]
E^{ns}(t,0) - \varepsilon K_M\partial_xE^{ns}(t,0) = \varepsilon^{3/2} b_{ns}(t),\\[3mm]
E^{ns}(0,x) = 0. 
\end{array}\right.
$$
Multiplying $E^{ns}$ on the left of the equation yields
\begin{align*}
\frac{1}{2}\frac{d|E^{ns}|^2}{dt} - \varepsilon (E^{ns}E^{ns}_x)_x + \varepsilon |E^{ns}_x|^2 = \varepsilon E^{ns}R^{ns}.
\end{align*}
Integrating over $x\in[0,\infty)$ and using the BC, we have
\begin{align}
&\frac{1}{2}\frac{d}{dt}\|E^{ns}\|_{L^2(\mathbb{R}^+)}^2 \nonumber \\[2mm]
=& - \varepsilon E^{ns}(t,0) E^{ns}_x(t,0) - \varepsilon \int_{\mathbb{R}^+} |E^{ns}_x|^2 dx + \varepsilon \int_{\mathbb{R}^+} E^{ns}R^{ns} dx \nonumber \\[2mm]
=& - \frac{1}{K_M} E^{ns}(t,0) \Big[E^{ns}(t,0) - \varepsilon^{3/2}b_{ns}(t)\Big] - \varepsilon \int_{\mathbb{R}^+} |E^{ns}_x|^2 dx + \varepsilon \int_{\mathbb{R}^+} E^{ns} R^{ns} dx \nonumber \\[2mm]
\leq & - \frac{1}{2K_M} |E^{ns}(t,0)|^2 + \frac{\varepsilon^{3}}{2K_M}b_{ns}^2(t) - \varepsilon \int_{\mathbb{R}^+} |E^{ns}_x|^2 dx +  \frac{1}{2} \int_{\mathbb{R}^+} |E^{ns}|^2 dx + \frac{\varepsilon^2}{2}\int_{\mathbb{R}^+} |R^{ns}|^2 dx.\label{eq:nsinequality}
\end{align}
Since $K_M>0$, it follows that
\begin{align*}
\frac{d}{dt}\|E^{ns}\|_{L^2(\mathbb{R}^+)}^2  
\leq &  ~\frac{\varepsilon^{3}}{K_M}b_{ns}^2(t) + \int_{\mathbb{R}^+} |E^{ns}|^2 dx + \varepsilon^2\int_{\mathbb{R}^+} |R^{ns}|^2 dx\leq C \varepsilon^{5/2} + \|E^{ns}\|_{L^2(\mathbb{R}^+)}^2 .
\end{align*}
By Gronwall's inequality, we get the estimate 
$$
\|E^{ns}(t,\cdot)\|_{L^2(\mathbb{R}^+)}\leq C(T)\varepsilon^{5/4}.
$$ 
At last, we integrate \eqref{eq:nsinequality} over $t\in[0,T]$ to obtain the estimate
$$
\int_0^T|E^{ns}(t,0)|^2dt \leq C(T)\varepsilon^{5/2}.
$$
This completes the proof.
\end{proof}

\section{General linear case}
\label{sec:AA}
	\subsection{Moment equations}
Now we consider the 3D time-dependent linear moment system
\begin{eqnarray*}
	\pd{W_{\varepsilon}}{t} + \sum_{d=1}^3 A_d\pd{W_{\varepsilon}}{x_d} &=&
	-\frac{1}{\varepsilon}QW_{\varepsilon},
\end{eqnarray*}
with the BCs at $\{x_2=0\}$,
\begin{equation}\label{eq:Grad_bc}
	B(W_{\varepsilon}-b)=0,
\end{equation}
where $b\in\bbR^N$ is given by \eqref{eq:Gbc_0} and
$B\in\bbR^{n\times N}$.

We assume that the initial value is compatible with the boundary
condition and there is no initial layer. Then we can
focus on the boundary-layer behavior of the moment system. In
the Grad BCs \eqref{eq:Gbc_0}, the matrix
\[B=E[\hat{\chi}S,M_o],\]
which has rare theoretical properties as mentioned in Section
\ref{sec:Back}. Because $S$ is symmetric positive definite,
we can choose the test polynomials in Grad's framework as
$\xi_2\phi_{\+\alpha}$ with $\+\alpha\in\mathbb{I}_e$ and
combine the obtained BCs linearly to get the modified BCs
\begin{equation}\label{eq:GBC}
[\hat{\chi}M_o^T,M_o^T S^{-1}M_o](W_{\varepsilon}-b)=0,
\end{equation}
which lead to
\[B=[\hat{\chi}M_o^T,M_o^T S^{-1}M_o].\]
We can check that the modified BCs with the no mass flow 
condition are maximal positive, which is important for the
well-posedness of linear hyperbolic equations.

\begin{theorem}
	 The modified BCs \eqref{eq:GBC} with $\+u\cdot\+n=-u_2=0$
	 are maximal positive. Namely,
	 \begin{itemize}
		\item[(i)] If $\+u^w=0$ and $\theta^w=0$, the BCs can 
	 determine a linear space $\mathcal{N}$ with  
	 \[\mathrm{dim}\ \mathcal{N} = m,\]
	 where $m$ is the number of nonnegative eigenvalues of 
	 $-A_2$ counting the multiplicity. 
		\item[(ii)] For any $v\in\mathcal{N}$, 
	 \[-v^TA_2v\geq 0.\]
	 \end{itemize}
\end{theorem}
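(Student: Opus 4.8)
The plan is to check directly the two defining properties of a maximal positive boundary space for the symmetric hyperbolic operator whose boundary matrix is $\sum_{d=1}^{3}A_dn_d=-A_2$: that $\mathcal{N}$ has the dimension of the nonnegative spectral subspace of $-A_2$, and that $-v^TA_2v\ge0$ for every $v\in\mathcal{N}$. I would work throughout in the even/odd block splitting $v=(v_e,v_o)$ with $v_e\in\bbR^m$ and $v_o\in\bbR^n$, in which
\[
A_2=\begin{bmatrix}0 & M_o\\ M_o^T & 0\end{bmatrix},\qquad -v^TA_2v=-2\,v_e^TM_ov_o .
\]
With $\+u^w=0$ and $\theta^w=0$ the only surviving component of $b$ is the wall density in the $\+\alpha=\+0$ slot, so for $v\in\mathcal{N}$ the condition \eqref{eq:Grad_bc} with $B$ from \eqref{eq:GBC} reads $M_o^T(\hat{\chi}v_e+S^{-1}M_ov_o)=\rho^w\hat{\chi}\,M_o^T\mathbf{e}_{\+0}$ for some wall density $\rho^w$, where $\mathbf{e}_{\+0}$ denotes the even-block unit vector of the density index.

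For property (ii) I would first read off from \eqref{eq:defofAQ} that the $\+\alpha=\+0$ column of $M_o^T$ is exactly the odd-block unit vector $\mathbf{e}_{\+e_2}$ at the index $\+e_2$, so the right-hand side above equals $\rho^w\hat{\chi}\,\mathbf{e}_{\+e_2}$. Left-multiplying the condition by $v_o^T$ and using $v_o^TM_o^Tv_e=v_e^TM_ov_o$ gives
\[
\hat{\chi}\,v_e^TM_ov_o+(M_ov_o)^TS^{-1}(M_ov_o)=\rho^w\hat{\chi}\,u_2 .
\]
The no mass flow condition $\+u\cdot\+n=-u_2=0$ annihilates the right-hand side, whence $v_e^TM_ov_o=-\hat{\chi}^{-1}(M_ov_o)^TS^{-1}(M_ov_o)\le0$ because $S$ (hence $S^{-1}$) is positive definite and $\hat{\chi}>0$; therefore $-v^TA_2v=-2v_e^TM_ov_o\ge0$. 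This identity is the heart of the argument and shows precisely why $u_2=0$ must be adjoined to \eqref{eq:GBC}.

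For property (i) I would count the scalar constraints cut out by \eqref{eq:GBC} together with $u_2=0$. Since $M_o$ has full column rank, $\hat{\chi}M_o^T$ is surjective onto $\bbR^n$, so $B$ has full row rank $n$; requiring $Bv$ to lie in the one-dimensional span of $M_o^T\mathbf{e}_{\+0}=\mathbf{e}_{\+e_2}$ removes all but the $\+e_2$-component and hence imposes $n-1$ independent conditions, while $u_2=0$ supplies one more. Using the surjectivity of $M_o^T$ I would exhibit a $v$ satisfying those $n-1$ conditions but with $u_2\neq0$, which shows that $u_2=0$ is genuinely independent of them, so $\dim\mathcal{N}=N-n=m$. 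It then remains to match this against the spectral count of $-A_2$ recalled in Section \ref{sec:Back}—$n$ positive, $n$ negative and $m-n$ zero eigenvalues—whose nonnegative subspace has dimension $n+(m-n)=m$.

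The main obstacle is exactly this bookkeeping in (i): tracking how the free wall density $\rho^w$ absorbs one row of \eqref{eq:GBC} and confirming, through the surjectivity of $M_o^T$, that adjoining $u_2=0$ restores one—and only one—independent constraint, with no accidental degeneracy. By contrast, once the block identity for $-v^TA_2v$ and the location of the $\+\alpha=\+0$ column of $M_o^T$ are in hand, property (ii) is just the short positive-definiteness computation displayed above.
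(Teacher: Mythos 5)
Your proof is correct, and for part (ii) it takes a genuinely different route from the paper's. The paper eliminates the free wall density explicitly: since the $\+\alpha=\+0$ row of $M_o$ is the unit vector at the $\+e_2$ slot, the condition $u_2=0$ fixes $\rho-\rho^w$ as a linear function of the remaining even components, and the quadratic form $-v_e^TM_ov_o$ is then identified with $\hat{\chi}\,v_e^T\bar{S}v_e$ for an explicit positive semi-definite $\bar{S}$ obtained as a Schur complement of $M_o(M_o^TS^{-1}M_o)^{-1}M_o^T$. Your pairing of the boundary relation against $v_o$ reaches the same conclusion in one line, because the $\rho^w$-term is multiplied by exactly $u_2$ and vanishes; this is shorter and makes the role of the no mass flow condition more transparent, at the price of not producing the explicit representation $-\tfrac{1}{2}v^TA_2v=\hat{\chi}\,v_e^T\bar{S}v_e$. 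For part (i) the two arguments are essentially dual: the paper solves for $(v_o,\rho^w)$ uniquely in terms of $v_e$ using $a_{11}\neq 0$, thereby parametrizing $\mathcal{N}$ by $v_e\in\bbR^m$, while you count $n-1$ independent constraints from $Bv\in\mathrm{span}(\mathbf{e}_{\+e_2})$ plus one from $u_2=0$; the witness you promise does exist (take $v_e=0$ and $v_o=(M_o^TS^{-1}M_o)^{-1}\mathbf{e}_{\+e_2}$, for which $u_2=\mathbf{e}_{\+e_2}^T(M_o^TS^{-1}M_o)^{-1}\mathbf{e}_{\+e_2}>0$), so the count closes. The one point to tidy up is the specular case $\hat{\chi}=0$, which the paper treats separately and your write-up implicitly excludes: you divide by $\hat{\chi}$ in (ii), and in (i) the span of $\hat{\chi}\mathbf{e}_{\+e_2}$ degenerates to $\{0\}$, so $u_2=0$ is no longer an independent constraint. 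There your own identity still gives $(M_ov_o)^TS^{-1}(M_ov_o)=0$, hence $v_o=0$, and both assertions are immediate.
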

\begin{proof}
	 By definition, we have
	 \[A_2=\begin{bmatrix} 0 & M_o \\ M_o^T & 0 \end{bmatrix}.\]
	 The no mass flow condition says that if $v\in\mathcal{N}$,
	 then 
	 \[v[\mathcal{N}(\+e_2)]=u_2=0.\]
	
	 Note that $M_o^TS^{-1}M_o>0$. We divide the vector into 
	 two parts, as
	 \[v = \begin{bmatrix} v_e \\ v_o \end{bmatrix},
		 \ v_e\in\bbR^m,\ v_o\in\bbR^n.\]
	 If $\hat{\chi}=0$, we have
	 \[v\in\mathcal{N}\quad\Leftrightarrow\quad v_o=0.\]
	 So $\mathrm{dim}\ \mathcal{N}=m$ and $v^TA_2v=0.$

	 If $\hat{\chi}>0$, since $M_o^T$ is upper triangular by
	 definition, $\rho^w$
	 will only occur in the first line of \eqref{eq:GBC} and we have
	 \[a_{11}(\rho-\rho^w)+a^Tv_e^* = v_o[\mathcal{N}(\+e_2)]=0,\]
	 where we write 
	 \[v_e = \begin{bmatrix} \rho \\ v_e^* \end{bmatrix},\quad
		 (M_o^TS^{-1}M_o)^{-1}M_o^T = \begin{bmatrix} a_{11} & a^T \\
		 \star & \star \end{bmatrix}.\]
	 Since $M_o^TS^{-1}M_o>0$ and $M_o$ is lower triangular with
	 full column rank,
	 $a_{11}$ equals the product of the first row and first column
	 of $(M_o^TS^{-1}M_o)^{-1}$ and $M_o^T$, which is not zero.
	 So for any $v_e\in\bbR^m$, the condition \eqref{eq:GBC} with
	 $u_2=0$ can determine the unique $v_o$ and $\rho^w$, which means
	 that 
	 \[\mathrm{dim}\ \mathcal{N}=m.\]
	 Next, because of the same reason, we can write
	 \[ M_o(M_o^TS^{-1}M_o)^{-1}M_o^T = \begin{bmatrix} c_0 & c_1^T\\
	 c_1 & S^*\end{bmatrix}\geq 0,\]
	 where $c_0>0$ and $S^*\geq 0$. Now $\rho^w$ should also
	 satisfy
	 \[c_0(\rho-\rho^w)+c_1^Tv_e^*=0,\]
	 which means that 
	 \[M_o(M_o^TS^{-1}M_o)^{-1}M_o^T(v_e-b_e) =  
	 \begin{bmatrix} 0 & 0\\
	 0 & S^*-c_0^{-1}c_1c_1^T\end{bmatrix}v_e:= \bar{S}v_e.
	 \]
	 Here $\bar{S}$ is symmetric positive semi-definite,
	 too. Thus, for any $v\in\mathcal{N},$ we have
	 \begin{eqnarray*}
		 -\frac{1}{2}v^TA_2v &=& -v_e^TM_ov_o \\
		 &=& \hat{\chi}v_e^T M_o(M_o^TS^{-1}M_o)^{-1}M_o^T
		 (v_e-b_e) \\
		 &=& \hat{\chi}v_e^T \bar{S} v_e \geq 0.
	 \end{eqnarray*}
	 This completes the proof.
 \end{proof}


	\subsection{Equations for coefficients}

Motivated by the Couette flow case, we also consider the three scales ansatz:
\begin{equation}\label{eq:Section4ansatz}
	W_\varepsilon\left(t,\+x^w;x_2\right)
=\os{W}\left(t,\+x^w;x_2\right)
+\vs{W}\left(t,\+x^w;y\right)
+\ks{W}\left(t,\+x^w;z\right).
\end{equation}
Here the variables are 
$$
y = \frac{x_2}{\sqrt{\varepsilon}},\qquad z = \frac{x_2}{\varepsilon},\qquad \+x^w=(x_1,x_3),
$$
$\os{W}$ is the outer solution while $\vs{W}$ and
$\ks{W}$ are boundary-layer corrections satisfying 
$$
\vs{W}(t,\+x^w;\infty)=\ks{W}(t,\+x^w;\infty)=0.
$$
These can be expanded as 
\[ 
\os{W} = \sum_{j=0}^{\infty}\ak^j\os{W}{j},\qquad 
\vs{W} = \sum_{j=0}^{\infty}\ak^j\vs{W}{j},\qquad
\ks{W} = \sum_{j=0}^{\infty}\ak^j\ks{W}{j}.
\]
Next we will derive the equations for each coefficient $\os{W}{j}$, $\vs{W}{j}$ and $\ks{W}{j}$.

\subsubsection{Outer solution}
\label{sec:401}
At the scale of bulk flow, we know that the outer solution $\os{W}$ should satisfy the linear moment equations. 
Plugging the expansion of $\os{W}$ into \eqref{eq:Grad} and matching the order of $\varepsilon$, we have
\begin{eqnarray}
\label{eq:os}
\left\{\begin{aligned}
	&O(\varepsilon^{-1}):\quad ~~ 0 = -Q\os{W}{0}, \\[2mm]
	&O(\varepsilon^{-1/2}):\quad  0 = -Q\os{W}{1}, \\[2mm]
	&O(\varepsilon^{j/2}):\quad ~ 
	 \pd{\os{W}{j}}{t}+\sum_{d=1}^3 A_d\pd{\os{W}{j}}{x_d} 
= -Q\os{W}{j+2},\quad j\geq 0.
\end{aligned}\right.
\end{eqnarray}

We can analyze these linear equations by the null space method,
which will divide the equations into two parts according to the
orthogonal projection onto the null space of $Q$. In the language
of matrices, we assume $G\in\bbR^{N\times p}$ is an orthonormal 
basis matrix of the null space of $Q$, and $H\in\bbR^{N\times(N-p)}$
is an orthogonal complement of $G$. Then we have
\[I_N=GG^T+HH^T,\]
where $I_N\in\bbR^{N\times N}$ is the identity matrix. 
One can multiply
\eqref{eq:os} from the left by $G^T$ to obtain 
\begin{equation}
	\pd{G^T\os{W}{j}}{t}+\sum_{d=1}^3 G^TA_d\pd{\os{W}{j}}{x_d} 
	= 0,\quad j\geq 0,
\end{equation}
which are called the equilibrium equations. Multiplying 
\eqref{eq:os} from the left by $H^T$ and noting that
\[H^TQ=H^TQ(GG^T+HH^T)=H^TQHH^T\]
with $H^TQH>0$, we have 
\begin{eqnarray}
\label{eq:osnc}
\left\{
\begin{aligned}
&O(\varepsilon^{-1}):\quad ~~ 0 =  H^T\os{W}{0},\\[2mm]
&O(\varepsilon^{-1/2}):\quad  0 =  H^T\os{W}{1},\\[2mm]
&O(\varepsilon^{j/2}):\quad ~~
	\pd{H^T\os{W}{j}}{t}+\sum_{d=1}^3 H^TA_d\pd{\os{W}{j}}{x_d} 
= -(H^TQH)H^T\os{W}{j+2},\ j\geq 0,
\end{aligned}\right.
\end{eqnarray}
which are known as the constitutive relations. 

Following this way, we can successively get the equilibrium equations
about $G^T\os{W}{j}$, where $H^T\os{W}{j}$ is seen as given from the
algebraic relations \eqref{eq:osnc}. We put the details of the routine
computation in Appendix \ref{app:A}. In conclusion, the resulting 
equilibrium equations are linearized Euler-type equations
\begin{eqnarray}\label{eq:ols}
\begin{aligned}
	\pd{\os{\rho}{j}}{t} + \sum_d \pd{\os{u}{j}{d}}{x_d} &= 0,\\
\pd{\os{u}{j}{i}}{t} + \pd{\left(\os{\rho}{j}+\os{\theta}{j}
	\right)}{x_i} &= -\sum_d \pd{\os{\sigma}{j}{id}}{x_d},\quad i=1,2,3,\\
\frac{3}{2}\pd{\os{\theta}{j}}{t} + \sum_d\pd{\os{u}{j}{d}}{x_d}
	&= -\sum_d\pd{\os{q}{j}{d}}{x_d},
\end{aligned}
\end{eqnarray}
where $\os{\sigma}{j}{id}$ and $\os{q}{j}{d}$ 
are derived from \eqref{eq:osnc}. When $j=0$ or $j=1$, we have
\[\os{\sigma}{j}{id}=0,\quad \os{q}{j}{d}=0.\]
When $j=2$, we have
\begin{eqnarray}\label{eq:cc01}
	\os{\sigma}{j}{id} &=& -\gamma_1\left(\pd{\os{u}{j-2}{i}}{x_d}
	   +\pd{\os{u}{j-2}{d}}{x_i}-\frac{2}{3}\delta_{id}
	   \nabla\cdot\os{\+u}{j-2}\right),\quad i,d=1,2,3. \\
	\os{q}{j}{d} &=& -\frac{5}{2}\gamma_2\pd{\os{\theta}{j-2}}{x_d},
		\label{eq:cc02}
\end{eqnarray}
where the constants $\gamma_1$ and $\gamma_2$ are defined in
\eqref{eq:cons0}\eqref{eq:cons1}. In the BGK model, we have
$\gamma_1=\gamma_2=1.$ For the hard sphere gas, when the moment
order $M$ is large, we approximatively \cite{Rein1990} have
$\gamma_1=1.270042$ and $\gamma_2=1.922284$, which are close 
to the values calculated by the Chapman-Enskog expansion of
the Boltzmann equation.

According to the classical theory of the linear hyperbolic 
system \cite{Benzoni,Hil2013}, the linearized Euler-type equations
\eqref{eq:ols} need and only need one boundary condition at
the wall, i.e., the value of $\os{u}{j}{2}$ should be prescribed
at $x_2=0$. 

\subsubsection{Viscous layer solution}
We assume there exists a viscous layer solution $\vs{W}$ which changes
dramatically at the normal direction to the boundary and vanishes
outside the viscous layer with thickness of $O\ak$. 
To derive the equations for $\vs{W}^{(j)}$, we collect the outer solution and the viscous layer correction together:
\begin{gather}
 \es{W} = \sum_{j=0}^{\infty}\ak^j\es{W}{j},\qquad	\es{W}{j}(t,\+x) = \os{W}{j}(t,\+x) + \vs{W}{j}(t,\+x^w;y),\label{eq:vinf0} 
\end{gather}
with $y = x_2/\sqrt{\varepsilon}\geq 0$ and $\+x^w=(x_1,x_3)$. The outer solution $\os{W}$ and $\es{W}$
should both satisfy the moment system \eqref{eq:Grad}. Since the
system is linear, we find that $\vs{W}=\es{W}-\os{W}$ also satisfies \eqref{eq:Grad}. Then plugging $\vs{W}$ into the equation and 
comparing each order of $\varepsilon$, we have 
\begin{eqnarray}\label{eq:vs}
\left\{\begin{aligned}
		&O(\varepsilon^{-1}):\quad ~~ 0 = -Q\vs{W}{0},\\[2mm]
		&O(\varepsilon^{-1/2}):\quad 
		A_2\pd{\vs{W}{0}}{y} = -Q\vs{W}{1}, \\[2mm]
		&O(\varepsilon^{j/2}):\quad ~~ \pd{\vs{W}{j}}{t} +\sum_{d\neq
		2}A_d\pd{\vs{W}{j}}{x_d}
		+ A_2\pd{\vs{W}{j+1}}{y} = -Q\vs{W}{j+2},\ j\geq 0.
\end{aligned}\right.
\end{eqnarray}
Utilizing an analogous null space method, the equilibrium 
variables $G^T\vs{W}{j}$ satisfy two degenerate algebraic 
relations and three linear parabolic equations, which is similar to
 the linearized Prandtl boundary layer equations \cite{Sone2007}.
Multiplying \eqref{eq:vs} from the left by $H^T$, we obtain the constitutive 
relations about $H^T\vs{W}{j}$ (details can be found in 
Appendix \ref{app:A}).

When $j=0$, the viscous layer solutions satisfy
the two algebraic relations
\begin{gather}
	\vs{u}{0}{2} = 0,\quad \vs{p}{0} = 0
\end{gather}
with
\[\vs{p}{j}:= \vs{\rho}{j}+\vs{\theta}{j}\]
and the following three parabolic equations
\begin{subequations}
\begin{eqnarray}
	\label{eq:vs_01}
	\pd{\vs{u}{0}{i}}{t} &=& \gamma_1\pd{^2\vs{u}{0}{i}}{y^2}
	,\quad i\neq 2,\\
	\pd{\vs{\theta}{0}}{t} &=& \gamma_2\pd{^2\vs{\theta}{0}}{y^2},
	\label{eq:vs_02}
\end{eqnarray}
\end{subequations}
where $\gamma_1$ and $\gamma_2$ are constants defined in 
\eqref{eq:cons0}\eqref{eq:cons1}.

When $j=1$, we have the equations
\begin{subequations}
	\begin{eqnarray}
		\vs{u}{1}{2} &=& \int_y^{\infty}\!\!\left(
		\pd{\vs{\rho}{0}}{t}+\sum_{d\neq 2}\pd{\vs{u}{0}{d}}
		{x_d}\right)(t,\+x^w;s)\,\mathrm{d}s,\\
		\vs{p}{1} &=& 0, \\[2mm]
		\pd{\vs{u}{1}{i}}{t} &=& \gamma_1 \pd{^2\vs{u}{1}{i}}{y^2}
		,\quad i\neq 2,\\
		\pd{\vs{\theta}{1}}{t} &=& \gamma_2 
		\pd{^2\vs{\theta}{1}}{y^2},
	\end{eqnarray}
\end{subequations}
which contain two algebraic relations determining $\vs{u}{1}{2},\
\vs{p}{1}$, and three parabolic equations.

For the higher order constitutive relations, we calculate that 
	\begin{subequations}
		\begin{eqnarray*}
			\vs{\sigma}{2}{id} &=& -\gamma_1\left(\pd{\vs{u}{0}{d}}{x_i}
			+\pd{\vs{u}{0}{i}}{x_d}-\frac{2}{3}\delta_{id}
			\left(\pd{\vs{u}{0}{1}}{x_1}+\pd{\vs{u}{0}{3}}{x_3}+
			\pd{\vs{u}{1}{2}}{y}\right)\right)
		 - \frac{2}{3}\gamma_3\delta_{id}
		\pd{^2\vs{\theta}{0}}{y^2},\ i,d\neq 2, \\	
			\vs{\sigma}{2}{22} &=& -\gamma_1\left(2\pd{\vs{u}{1}{2}}{y}
		-\frac{2}{3}
			\left(\pd{\vs{u}{0}{1}}{x_1}+\pd{\vs{u}{0}{3}}{x_3}+
			\pd{\vs{u}{1}{2}}{y}\right)\right) 
		+ \frac{4}{3}\gamma_3\pd{^2\vs{\theta}{0}}{y^2},\\
			\vs{\sigma}{2}{2d} &=& -\gamma_1\pd{\vs{u}{1}{d}}{y},
	\ d\neq 2.\\
			\vs{q}{2}{i} &=& -\frac{5}{2}\gamma_2\pd{\vs{\theta}{0}}{x_i}
			+\gamma_3\pd{^2\vs{u}{0}{i}}{y^2},\quad i\neq 2,
			\\ \vs{q}{2}_{2} &=&
	-\frac{5}{2}\gamma_2\pd{\vs{\theta}{1}}{y},
		\end{eqnarray*}
	\end{subequations}
where the constant $\gamma_3$ is given by \eqref{eq:Agam3} and
equals one in the BGK case. For the hard sphere gas, the value
of $\gamma_3$ calculated by the LBE
\cite{Sone2007} is $1.947906$.

According to the theory of parabolic equations in half-space,		  
every order of the viscous layer solutions exactly need 
three boundary conditions at $y=0$, e.g., the values of
$\vs{u}{j}{d}(0),\ d\neq 2,$ and $\vs{\theta}{j}(0)$.  

\begin{remark}
As shown in the Couette flow, the viscous layer would vanish
for the linear steady problem, but can not be ignored for the 
instantaneous flow. In the latter case, the outer solution
can't satisfy the boundary condition of the moment equations
up to $O(1)$. So we should insert a viscous layer to match the
boundary condition. We note that for the non-linear problem,
the viscous layer also appears in the steady flow \cite{Sone2007}.
\end{remark}

\subsubsection{Knudsen layer solution}
Since the moment system need $n$ boundary conditions at the wall
(cf. Section \ref{sec:Back}), generally we should insert a Knudsen
layer $\ks{W}$ to match the BCs up to the higher order. 
We assume there exists a Knudsen layer solution which changes
dramatically at the normal direction to the boundary and vanishes
outside the Knudsen layer with thickness of $O(\varepsilon)$. 

Clearly, the asymptotic solution $\es{W}_{\varepsilon}$ defined in \eqref{eq:Section4ansatz} should satisfy the the moment system \eqref{eq:Grad}. Besides, from the previous discussion, we know that $\es{W}=\os{W}+\vs{W}$ should also satisfy the moment system. Since the equations are 
linear, we find that the Knudsen layer solution $\ks{W}=\es{W}_{\varepsilon}
-\es{W}$ satisfies \eqref{eq:Grad}. 
Substituting the ansatz 
\[\ks{W}\left(t,\+x^w;z\right) = \sum_{j=0}^{\infty}\ak^j\ks{W}{j}\left(t,\+x^w;z\right)
\]
into the moment system and matching the order of $\varepsilon$, we have
\begin{eqnarray}
\label{eq:kseq}
\begin{aligned}
	A_2\pd{\ks{W}{j}}{z} &= -Q\ks{W}{j},\ j=0,1,\\[2mm]
	A_2\pd{\ks{W}{j}}{z} &= -Q\ks{W}{j}-\pd{\ks{W}{j-2}}{t}
	-\sum_{d\neq 2}\pd{\ks{W}{j-2}}{x_d},\ j\geq 2. 
\end{aligned}
\end{eqnarray}
The Knudsen layer solution satisfies a system of linear
homogeneous (or non-homogeneous) ODEs in half-space.
According to the result by Bobylev and Bernhoff \cite{Bern2008}
(also see Appendix \ref{app:A}),
the Knudsen layer solution need and only need $n-4$ boundary
conditions, where $n$ is exactly the number of boundary conditions
required by the linear moment equations, and $4$ coincides with
the number required by the outer solution and viscous layer solution.

\subsection{BCs for coefficients} 
Now we plug the above expansion into the BCs \eqref{eq:Grad_bc} for
the moment equations. According to solvability conditions of the 
Knudsen layer solution, we can obtain slip boundary conditions for
the outer solution and viscous layer solution. The Robin coefficients
in the BCs are determined by some elemental half-space problems.
Following this way, we may successively solve all the asymptotic 
equations.

As a generalization of the Couette flow, we state the solvability
theorem as follows, which is a main result of our earlier paper
\cite{Yang2022a}:
\begin{theorem}\label{thm:41}
	Consider the half-space problem for $\ks{W}=\ks{W}(z)$:
\begin{eqnarray}
\label{eq:KL1}
\begin{aligned}
	&A_2\pd{\ks{W}}{z} = -Q\ks{W},\quad z\geq 0,\\[2mm]
	&B(\ks{W}(0)-h) = 0,\quad \ks{W}(\infty)=0.
\end{aligned}
\end{eqnarray}
	Here $A_2$ and $Q$ are given by \eqref{eq:defofAQ}, 
	$B$ given by \eqref{eq:GBC}, and $h\in\bbR^N$.
	We can decompose any $h$ as
	\[h= (GG^T+HH^T)h = G_e(G_e^Th) + H(H^Th),\]
	where
	$G_e=[\varphi_0,\varphi_1,\varphi_3,\varphi_4]\in\bbR^{N\times 4}$
	defined in Appendix \ref{app:A}.
	Then, for any given $H^Th$,
	the system \eqref{eq:KL1} is uniquely solvable which gives
	the value of 
	$\ks{W}$ and $G_e^Th$.
\end{theorem}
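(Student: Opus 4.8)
The plan is to recast \eqref{eq:KL1} as a linear ODE on the half-line, to identify its finite-dimensional space $V_-$ of decaying solutions, and then to show that the boundary operator $B$, restricted to $V_-$ and augmented by the four equilibrium directions $G_e$, is a bijection onto $\bbR^n$. Once that bijection is established, the prescribed datum $H^Th$ determines a right-hand side and hence a unique pair $(\ks{W},\,G_e^Th)$.

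First I would reduce \eqref{eq:KL1} to standard form. Since $A_2$ is singular, I split $\bbR^N=\ker A_2\oplus\mathrm{Range}(A_2)$; testing the equation against $\ker A_2$ turns $m-n$ of the $N$ scalar equations into algebraic constraints, while the remaining $2n$ components obey a genuine system $\ks{W}'=L\ks{W}$ on $\mathrm{Range}(A_2)$. Because $A_2$ restricted to $\mathrm{Range}(A_2)$ is nondegenerate with signature $(n,n)$ and $Q\ge 0$, the quadratic form $\ks{W}^TA_2\ks{W}$ is nonincreasing in $z$. The spectral analysis of $L$ (following Bobylev--Bernhoff \cite{Bern2008} and detailed in Appendix \ref{app:A}) shows that the constant modes span $\ker Q$ and that, after excluding them via $\ks{W}(\infty)=0$, the admissible decaying space has $\dim V_-=n-4$. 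I expect this dimension count, namely the splitting of the neutral hydrodynamic modes that produces the ``$4$'', to be the main obstacle; it is exactly the content quoted from \cite{Yang2022a}.

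Granting $\dim V_-=n-4$, the map $T\colon V_-\times\bbR^4\to\bbR^n$, $(\ks{W}(0),G_e^Th)\mapsto B\ks{W}(0)-BG_e(G_e^Th)$, acts between spaces of equal dimension $n$, so it suffices to prove injectivity, and here I would use the energy method. For a decaying solution with $H^Th=0$, i.e.\ $h=G_ed$ with $d\in\bbR^4$, integrating $\tfrac{d}{dz}(\ks{W}^TA_2\ks{W})=-2\ks{W}^TQ\ks{W}$ over $z\ge 0$ gives $\ks{W}(0)^TA_2\ks{W}(0)=2\int_0^\infty\ks{W}^TQ\ks{W}\,dz\ge 0$. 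Since each column of $G_e$ lies in $\ker Q$, testing the ODE against it yields $\tfrac{d}{dz}(G_e^TA_2\ks{W})=0$, so $G_e^TA_2\ks{W}(z)\equiv 0$ and in particular $h^TA_2\ks{W}(0)=0$; moreover $h^TA_2h=0$ because the entries of $G_e=[\varphi_0,\varphi_1,\varphi_3,\varphi_4]$ are even in $\xi_2$, making $\xi_2h^2$ odd. The same flux identity applied to $\varphi_0$ forces the mass flux $u_2=\varphi_0^TA_2\ks{W}\equiv 0$. Consequently $\ks{W}(0)-h$ satisfies both $B(\ks{W}(0)-h)=0$ and $u_2=0$, so it lies in the boundary space $\mathcal{N}$, and $(\ks{W}(0)-h)^TA_2(\ks{W}(0)-h)=\ks{W}(0)^TA_2\ks{W}(0)$.

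Finally I would invoke the maximal positivity of the modified BCs \eqref{eq:GBC} established above, which gives $(\ks{W}(0)-h)^TA_2(\ks{W}(0)-h)\le 0$; combined with the previous line this forces $\ks{W}(0)^TA_2\ks{W}(0)=0$, hence $\ks{W}^TQ\ks{W}\equiv 0$, i.e.\ $\ks{W}(z)\in\ker Q$ for all $z$. The ODE then gives $A_2\ks{W}'=0$, and since one checks $\ker A_2\cap\ker Q=\{0\}$ the solution is constant, so $\ks{W}\equiv 0$ by decay. The boundary relation collapses to $BG_ed=0$, and a direct computation with $B=[\hat{\chi}M_o^T,M_o^TS^{-1}M_o]$, parallel to Lemma \ref{lemma:3-1}, shows $BG_e$ has full column rank, whence $d=0$. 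Thus $T$ is injective, therefore bijective, which produces the unique $\ks{W}$ together with the unique $G_e^Th$ for every prescribed $H^Th$. The two steps demanding genuine work are the dimension identity $\dim V_-=n-4$ and the rank of $BG_e$; the rest is the energy estimate above.
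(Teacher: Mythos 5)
The paper itself does not prove Theorem \ref{thm:41}: it is imported verbatim as ``a main result of our earlier paper \cite{Yang2022a}'', with only the supporting eigenvalue count sketched in Appendix \ref{app:A} and a numerical recipe in Appendix \ref{app:C}. So there is no in-paper proof to match your argument against; judged on its own, your proposal is coherent and essentially correct, and it is the natural abstract generalization of Lemma \ref{lemma:3-1}. Where the Couette case proves invertibility of the explicit matrix $H_M(\chi)$ by an algebraic manipulation exposing $\hat{\chi}S_c$ plus a positive semi-definite block, you replace the explicit diagonalization by a dimension count ($\dim V_-+4=n$) plus injectivity via the energy identity $\frac{d}{dz}\bigl(\ks{W}^TA_2\ks{W}\bigr)=-2\ks{W}^TQ\ks{W}$ and the maximal positivity of the modified BCs. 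The conservation identities $G_e^TA_2\ks{W}\equiv 0$ (hence $h^TA_2\ks{W}(0)=0$ and vanishing mass flux), the parity argument giving $h^TA_2h=0$, and the closing step $\ks{W}'\in\ker A_2\cap\ker Q=\{0\}$ are all sound; this is a real gain in the general setting, where no explicit analogue of $R_e$, $R_o$, $\ks{\Lambda}_+$ is available.

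Two caveats. First, everything rests on $\dim V_-=n-4$, which you take from \cite{Bern2008} and \cite{Yang2022a} exactly as the paper does; it is the one substantive input you have not supplied, and the DAE reduction in your first paragraph (differentiating the algebraic constraints coming from $\ker A_2$ and accounting for possible Jordan blocks of the pencil $(A_2,Q)$ on $\ker Q$) is where that work hides. Second, your final step --- full column rank of $BG_e$ --- genuinely requires $\hat{\chi}>0$: since $G_ed$ has vanishing odd-moment components, $BG_ed=\hat{\chi}M_o^T$ applied to the even part of $G_ed$, which is identically zero when $\chi=0$, so $G_e^Th$ is not determined and the theorem as stated fails for pure specular reflection. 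The statement in \eqref{eq:GBC} leaves this hypothesis implicit; your proof correctly isolates it, and you should state it explicitly.
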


	When $j=0$, the Knudsen layer solution satisfies 
\begin{eqnarray}
\label{eq:KL0}
\begin{aligned}
	A_2\pd{\ks{W}{0}}{z} &= -Q\ks{W}{0},\\[2mm]
	B(\es{W}{0}+\ks{W}{0}-\es{b}{0}) &= 0,\quad \text{at}\ z=0,
\end{aligned}
\end{eqnarray}
where we expand $b$ analogously with 
\[
	\es{b}{0}[\mathcal{N}(\+0)]=\rho^{w,(0)},\
 	\es{b}{0}[\mathcal{N}(\+e_i)]=u_i^{w,(0)},\
	\es{b}{0}[\mathcal{N}(2\+e_i)]=\theta^{w,(0)}/\sqrt{2}.
\]
Here we can calculate that 
	\[G_e^T (\es{W}{j}-\es{b}{j}) = \left[\es{\rho}{j}-\rho^{w,(j)},
	\es{u}{j}{1}-u^{w,(j)}_1,
	\es{u}{j}{3}-u^{w,(j)}_3, \frac{\sqrt{6}}{2}\left(
			\es{\theta}{j}-\theta^{w,(j)}\right)\right]^T.\]
By Theorem \ref{thm:41}, there is only the 
zero solution $\ks{W}{0}=0$ and $G_e^T (\es{W}{0}-\es{b}{0})=0$ 
since $H^T(\es{W}{0}-\es{b}{0})=0.$ This shows that there is
no Knudsen layer when $j=0$ and we have the no-slip BCs
\begin{subequations}
	\begin{eqnarray}
		\label{eq:vkbc0_0}
		\es{u}{0}{i} &=& u_{i}^{w,(0)},\quad i=1,2,3, \\
		\es{\theta}{0} &=& \theta^{w,(0)}.
		\label{eq:vkbc0_2}
	\end{eqnarray}
\end{subequations}

Analogously, when $j=1$, we regard $H^T\es{W}{1}$ as the driven
term and $G_e^T(\es{W}{1}-\es{b}{1})$ can be solved from
\eqref{eq:KL1}. From Appendix \ref{app:A}, we have 
$H^T\os{W}{1}=0$ and $H^T\vs{W}{1}$ represented by derivatives
of $G^T\vs{W}{0}$. Thus, we have the following slip BCs for 
the outer solution and viscous layer solution:
\begin{subequations}
	\begin{eqnarray} \label{eq:vkbc1_0}
		\es{u}{1}{2} &=& 0,\\
		\es{u}{1}{i}-u_1^{w,(1)} 
		&=& \sqrt{2}k_0\pd{\vs{u}{0}{i}}{y},\quad i=1,3,\\
		\es{\theta}{1}-\theta^{w,(1)} &=&
		\sqrt{2}t_1\pd{\vs{\theta}{0}}{y},
		\label{eq:vkbc1_2}
	\end{eqnarray}
\end{subequations}
where $k_0$ and $t_1$ are constants solved from the elemental
problems defined below.

When $j=2$, since $\ks{W}{0}=0$, the Knudsen layer solution also
satisfies the linear homogeneous equations in half-space. From
Appendix \ref{app:A}, now $H^T\es{W}{2}$ can be represented by
derivatives of $G^T\es{W}{0},\ G^T\es{W}{1}$ and $H^T\es{W}{1}.$ 
According to the solvability condition, after a careful calculation,
we have the slip BCs
\begin{subequations}
	\begin{eqnarray}\label{eq:vkbc2_0}
		\es{u}{2}{2} &=& 0,\\
		\es{u}{2}{i}-u_i^{w,(2)} &=& \sqrt{2}k_0
		\left(\pd{\os{u}{0}{i}}{x_2}
		+\pd{\os{u}{0}{2}}{x_i}+\pd{\vs{u}{1}{i}}{y}\right)
		\notag \\ && + 2t_0\pd{\os{\theta}{0}}{x_i}
		+ 2k_2\pd{^2\vs{u}{0}{i}}{y^2},
		\quad i=1,3,\\
		\es{\theta}{2}-\theta^{w,(2)}  
		&=& \sqrt{2}t_1\left(\pd{\vs{\theta}{1}}{y}
		+\pd{\os{\theta}{0}}{x_2}\right) + 
		2t_2 \pd{^2\vs{\theta}{0}}{y^2}
		\notag \\ &&
		+ k_1 \left(\pd{\vs{u}{1}{2}}{y}+
		\pd{\os{u}{0}{2}}{x_2}\right),
		\label{eq:vkbc2_2}
	\end{eqnarray}
\end{subequations}
where the constants $k_i$ and $t_i$ are solved from the following
elemental problems.

The elemental problems arise from the linear superposition principle.
For example, $H^T\vs{W}{1}$ is a linear combination of derivatives
of $G^T\vs{W}{0}$. So we can decompose the vector $H^T\vs{W}{1}$ into
several parts, which have the different ``driven'' terms such as
the gradient of velocity and the gradient of temperature.
In essence, we replace $h$ in \eqref{eq:KL1} by different driven
terms to obtain the elemental problems. Here we have six 
elemental problems in the form
\begin{eqnarray}
\label{eq:KLA1}
\begin{aligned}
	A_2\pd{\ks{W}}{z} = -Q\ks{W}&,\ \ks{W}=\ks{W}(z),\ z\geq 0,
	\ \ks{W}(\infty)=0,\\
	B\left(G_eG_e^Th+\ks{W}\right) &= 
	BH(H^TQH)^{-1}\mathcal{A},\quad \text{at}\ z=0.
\end{aligned}
\end{eqnarray}
More precisely, using the vectors $r_{id}$ and $s_d$ defined 
in \eqref{eq:cons0}\eqref{eq:cons1}, we have
\begin{itemize}
	\item Velocity slip problem. 
		\[\mathcal{A}=r_{12},\quad k_0=\frac{\sqrt{2}}{2}\varphi_1^Th.\]
	Note that here $\mathcal{A}$ is given, and $k_0$ is part of
		$G_e^Th$, solved from the system.

	\item Temperature jump problem. 
		\[\mathcal{A}=s_2,\quad t_1=
		\frac{\sqrt{3}}{3}\varphi_4^Th.\]
	
	\item Thermal creep problem. 
		\[\mathcal{A}=s_1,\quad t_0 = \frac{1}{2}\varphi_1^Th.\]

	\item The fourth problem. 
		\[\mathcal{A}=\sqrt{2}H^Tr_{22},\quad 
		k_1=\frac{\sqrt{6}}{3}\varphi_4^Th.\]

	\item Second order viscous slip problem. 
		\[\mathcal{A}= -H^TA_2H(H^TQH)^{-1}r_{12},\quad
		k_2 = \frac{1}{2}\varphi_1^Th.
		\]

	\item Second order temperature jump problem. 
		\[\mathcal{A}= -H^TA_2H(H^TQH)^{-1}s_{2},\quad
		t_2 = \frac{\sqrt{6}}{6}\varphi_4^Th.
		\]
\end{itemize}

Till now, we have derived the asymptotic equations and their slip
boundary conditions from the linear moment system by Hilbert expansion.
The procedure to determine the asymptotic solutions are concluded
as follows:
\begin{enumerate}
	\item Solve $\vs{u}{j}{2}$ from the algebraic relation of the
   viscous layer solution.
	\item Determine $\os{u}{j}{2}=-\vs{u}{j}{2}$ at the boundary
 	and solve the linearized Euler-type equations \eqref{eq:ols} to
	get $\os{W}{j}$.
	\item Determine $\vs{u}{j}{i},\ i\neq 2,$ and $\vs{\theta}{j}$
	from the slip boundary conditions. Then solve the parabolic
	equations to get $\vs{W}{j}$.
	\item Solve the Knudsen layer solution $\ks{W}{j}$ from the
	half-space problem.
	\item Let $j=j+1$ and return to the first step. 
\end{enumerate}

\section{NS equations and slip BCs}
\label{sec:NSBC}
	\subsection{Construction}
The NS equations never directly appear in the Hilbert
expansion \cite{CC1989}. However, we may formally retrieve
the NS equations and construct their BCs by collecting 
the coefficients of $\os{W}{j}$ and $\vs{W}{j}$ (outer solutions and the viscous layer solutions) based on the matching requirement.

Inspired by the Couette flow case, we consider 
\[ W_{m} = \sum_{j=0}^2\ak^j\os{W}{j}+\sum_{j=0}^2\ak^j\vs{W}{j}.\]
Then we can write equations about $W_m$ from 
\eqref{eq:os} and \eqref{eq:vs}. For example, we have
\begin{eqnarray*}
	\pd{\rho_m}{t} + \sum_d\pd{u_{d,m}}{x_d} &=&
	0 + \pd{}{t}\sum_{j=0}^2\ak^j\vs{\rho}{j} + \sum_d\pd{}{x_d}
	\sum_{j=0}^2\ak^j\vs{u}{j}{d} \\
	&=& \ak^2\pd{\vs{\rho}{2}}{t} + \ak^2\sum_{d\neq 2}
	\pd{\vs{u}{2}{d}}{x_d} \\
	&\triangleq& \varepsilon R_0(t,\+x^w;y),
\end{eqnarray*}
where $R_0$ is of unity order and vanishes
when $y\rightarrow +\infty.$ Treating the other equilibrium equations
in the same way, we have 
\begin{subequations}
\begin{eqnarray}\label{eq:LM0}
	\pd{{\rho}_{m}}{t} + \sum_d \pd{{u}_{d,m}{}}{x_d} &=& 0 +
	\varepsilon R_0(t,\+x^w;y),\\
	\pd{{u}_{i,m}{}}{t} + \pd{({\rho}_{m}+{\theta}_{m})}{x_i}
	&=& \varepsilon\sum_d \pd{}{x_d}\left(\gamma_1
	\left(\pd{{u}_{i,m}{}}{x_d}  \label{eq:LM1}
	   +\pd{{u}_{d,m}{}}{x_i}-\frac{2}{3}\delta_{id}
	   \nabla\cdot{\+u}_{m}\right)\right) \\ \notag
	   && +\varepsilon R_i(t,\+x^w;y),
	   \quad i=1,2,3,\\ \label{eq:LM2}
	\frac{3}{2}\pd{{\theta}_{m}}{t} + \sum_d \pd{{u}_{d,m}{}}{x_d}
	&=& \frac{5}{2}\varepsilon \sum_d \pd{}{x_d}\left(\gamma_2 
	\pd{{\theta}_{m}}{x_d}\right)+\varepsilon R_4(t,\+x^w;y), 
\end{eqnarray}
\end{subequations}
where $R_i$ is of unity order and vanishes when $y\rightarrow +\infty.$
At the same time, from the BCs
\eqref{eq:vkbc0_0}-\eqref{eq:vkbc0_2},
\eqref{eq:vkbc1_0}-\eqref{eq:vkbc1_2} and
\eqref{eq:vkbc2_0}-\eqref{eq:vkbc2_2}, the BCs for $W_m$ should be
\begin{subequations}
	\begin{eqnarray} \label{eq:nsb1}
		u_{2,m} &=& 0,\\ \label{eq:nsb2}
		u_{i,m} - u_{i}^w &=& {\sqrt{2}}k_0\varepsilon
		\left(\pd{u_{i,m}}{x_2}+ \pd{u_{2,m}}{x_i}\right) 
		+ {2}t_0\varepsilon\pd{\theta_m}{x_i} \notag \\
		&& + 2k_2\varepsilon^2\pd{^2 u_{i,m}}{x_2^2} +
		O(\varepsilon^{3/2}),\quad i=1,3,\\ \label{eq:nsb3}
	\theta_m - \theta^w  &=& {\sqrt{2}}
	t_1\varepsilon\pd{\theta_m}{x_2}
		+ 2t_2 \varepsilon^2 \pd{^2{\theta_m}}{x_2^2}
		+ k_1\varepsilon\pd{u_{2,m}}{x_2}+O(\varepsilon^{3/2}).
\end{eqnarray}
\end{subequations}

\noindent \textbf{NS equations.}
Discarding the residual terms $R_i$ in \eqref{eq:LM0}-\eqref{eq:LM2},
we formally obtain the (linearized dimensionless) Navier-Stokes 
equations 
\begin{subequations}
\begin{eqnarray}\label{eq:LNS0}
	\pd{{\rho}_{}}{t} + \sum_d \pd{{u}_{d}{}}{x_d} &=& 0,\\
	\pd{{u}_{i}{}}{t} + \pd{({\rho}_{}+{\theta}_{})}{x_i}
	&=& \varepsilon\sum_d \pd{}{x_d}\left(\gamma_1
	\left(\pd{{u}_{i}{}}{x_d}  \label{eq:LNS1}
	   +\pd{{u}_{d}{}}{x_i}-\frac{2}{3}\delta_{id}
	   \nabla\cdot{\+u}_{}\right)\right),
	   \quad i=1,2,3,\\ \label{eq:LNS2}
	\frac{3}{2}\pd{{\theta}_{}}{t} + \sum_d \pd{{u}_{d}{}}{x_d}
	&=& \frac{5}{2}\varepsilon \sum_d \pd{}{x_d}\left(\gamma_2 
	\pd{{\theta}{}}{x_d}\right). 
\end{eqnarray}
\end{subequations}
This form is consistent with the NS equations introduced 
in \eqref{linearNS} as $\mu=\gamma_1$ and $\lambda = \gamma_2$.
\\

\noindent\textbf{Slip BCs for the NS equations.} 
Moreover, discarding $O(\varepsilon^{3/2})$ terms in
\eqref{eq:nsb1}-\eqref{eq:nsb3}, we obtain the constructed BCs: 
\begin{subequations}
	\begin{eqnarray} \label{eq:Nsb1}
		u_{2} &=& 0,\\ \label{eq:Nsb2}
		u_{i} - u_{i}^w &=& {\sqrt{2}}k_0\varepsilon
		\left(\pd{u_{i}}{x_2}+ \pd{u_{2}}{x_i}\right) 
		+ {2}t_0\varepsilon\pd{\theta}{x_i} 
		+ 2k_2\varepsilon^2\pd{^2 u_{i}}{x_2^2},
		\quad i=1,3,\\ \label{eq:Nsb3}
	\theta - \theta^w  &=& {\sqrt{2}}
	t_1\varepsilon\pd{\theta}{x_2}
		+ 2t_2 \varepsilon^2 \pd{^2{\theta}}{x_2^2}
		+ k_1\varepsilon\pd{u_{2}}{x_2}.
\end{eqnarray}
\end{subequations}

\begin{remark}
Note that the second-order spatial derivatives at the normal direction appear in the above BCs. As to the constructed BCs in Section \ref{sec:3}, the term with second-order derivatives is transferred to the temporal derivative (see \eqref{eq:couette2edderi}). For the general system, it seems not straightforward to use the same technique. 
\end{remark}

Assume there is no initial layer and the initial values are same
for equations of $W_m$ and the NS equations. Subtracting the NS
equations \eqref{eq:LNS0}-\eqref{eq:LNS2} by
the truncated moment equations \eqref{eq:LM0}-\eqref{eq:LM2}, we
shall find that the error functions $\rho-\rho_m,\ u_i-u_{i,m}$ and
$\theta-\theta_m$ satisfy \eqref{eq:LM0}-\eqref{eq:LM2} with
the zero initial value and the BCs
\eqref{eq:nsb1}-\eqref{eq:nsb3} where $u_i^w=\theta^w=0$.
In case of the Couette flow, we have proved that 
\[\|u_1-u_{1,m}(t,\+x^w;\cdot)\|_{L^2(\bbR^+)}
\leq C(T)\varepsilon^{5/4} \]
for $t\in[0,T].$ For the general situation,
we have no rigorous proof but expect that
the $L^2$ errors of these five field variables would be
higher order terms about $\varepsilon$.

In conclusion, we have formally constructed the linearized 
NS equations with slip BCs from the linear moment equations.
The obtained BCs contain not only the first-order derivatives
but also the second-order derivatives at the normal direction
to the wall. Besides the formal derivation, our analysis
on the Couette flow exhibits that the second-order terms
are necessary for instantaneous flows to get a first-order
approximation solution. 

\subsection{Comparison and remarks}
\label{sec:6}
We will compare the obtained slip BCs with related work
from two aspects, i.e., the form and the values of slip
coefficients. 

Firstly, we compare the form of the obtained slip BCs.
Putting aside the second-order derivatives in the BCs,
the BCs \eqref{eq:Nsb1}-\eqref{eq:Nsb3} 
coincide with the linearized version of the classical
first-order slip BCs \cite{Aoki2017} for the NS equations.
As mentioned in Section \ref{sec:Back}, many efforts have
been paid to deriving the second-order slip BCs, both
from the physical and mathematical insights. Compared 
with the second-order slip BCs in \cite{Dei1964}, our
second-order terms only contain the normal derivatives,
not including the tangential derivatives or mixed derivatives.
The difference arises from the methodology behind the derivation.

In this paper, we aim to derive BCs for the NS equations
such that the solution of the obtained system is a
first-order, not second-order, approximation solution 
to the moment equations. Due to the existence of the viscous
layer, the second-order normal derivatives, e.g.,
$\varepsilon^2\pd{^2\theta}{x_2^2}$, are
in essence of the magnitude $O(\varepsilon)$, not $O(\varepsilon^2).$
While the second-order tangential or mixed derivatives are
of the magnitude $O(\varepsilon^2)$, ignored as the higher-order
quantities. In \cite{Dei1964}, the second-order derivatives arise
from the expansion to the second order $O(\varepsilon^2)$, without 
considering the half order $O\ak.$

Secondly, we compare the obtained slip coefficients. For this
purpose, we refer to Sone's generalized slip flow theory
\cite{Sone2007,2019Hat}. 
In the linear case \cite{2019Hat}, the generalized slip flow
theory applies Hilbert expansion to the Boltzmann equation,
obtaining the so-called fluid-dynamic equations and Knudsen
layer corrections. The expansion is on the parameter $\varepsilon$ 
and explicitly given up to the second order. Slip boundary
conditions and their slip coefficients are determined by
solving linear layer equations \cite{2019Hat}.  

However, these slip boundary conditions are proposed for their
fluid-dynamic equations, not the NS equations. Also, \cite{2019Hat}
does not consider the scale $O(\sqrt{\varepsilon})$. Despite
these differences, the elemental half-space problems in \cite{2019Hat} 
provide a good reference for the elemental problems in this
paper. Numerically, we can see that the solved constants
$k_0$, $t_0$, $t_1$, $k_1, k_2$ and $t_2$  
individually converge to the coefficients $b_1^{(1)}, b_2^{(1)},
c_1^{(0)}, c_5^{(0)}, b_4^{(1)}$ and $c_6^{(0)}$ in \cite{2019Hat}
when the moment order $M$ goes larger. This is because
we both use the Hilbert expansion and introduce the Knudsen
layer correction, while \cite{2019Hat} starts from the
Boltzmann equation and we start from the moment equations.
Incidentally, \cite{Sone2007,2019Hat} linearize the
Boltzmann equation around $\pi^{-3/2}\exp(-|\+\xi|^2)$
rather than $\mathcal{M}$ in this paper. So for convenience of
comparison, there are
some scaling constants in \eqref{eq:Nsb1}-\eqref{eq:Nsb3}
before the slip coefficients $t_i$ and $k_i$.

The numerical method to solve elemental problems is briefly 
stated in Appendix \ref{app:C}.
We compare the calculated coefficients with results in
\cite{2019Hat} for the BGK model when the accommodation
coefficient $\chi=1$. Table \ref{tab:01}
and Table \ref{tab:02} list the corresponding results. 
The parity of $M$ would affect the solution behavior for different
problems. To make the convergence trends clear, we put the results for $M$
with the same parity in one table. We can 
see that the relative error is less than $1\%$ when $M>10$.
So we may deduce that moment equations with mild moments can 
model the half-space problems well. 

\begin{table}[!htb] 
\centering 
\caption{The coefficients $k_0, t_0, k_2$ compared with \cite{2019Hat} for 
	the BGK model when $\chi=1$. 
		($k_0$ is compared with $b_1^{(1)}$, $t_0$
	with $b_2^{(1)}$, $k_2$ with $b_4^{(1)}$, where the latter
	coefficients appearing in \cite{2019Hat})}\label{tab:01}
\begin{tabular}{ccccccc} 
\toprule 
	 & \cite{2019Hat} & $M=4$ & $M=6$ & $M=8$ & $M=10$ & $M=12$ \\ 
\midrule 
$k_0$ & 1.01619 & 0.99247 & 1.00360 & 1.00772 & 1.00984 & 1.01112\\
$t_0$ & 0.38316 & 0.36988 & 0.37617 & 0.37848 & 0.37967 & 0.38039\\
$k_2$ & -0.76632 & -0.73976& -0.75233 & -0.75697 & -0.75934 &
-0.76077\\
\bottomrule 
\end{tabular}
\end{table}

\begin{table}[!htb] 
\centering 
\caption{The coefficients $k_1, t_1, t_2$ compared with \cite{2019Hat} for 
	the BGK model when $\chi=1$.
	   	($k_1$ is compared with $c_5^{(0)}$, $t_1$
	with $c_1^{(0)}$, $t_2$ with $c_6^{(0)}$, where the latter
	coefficients appearing in \cite{2019Hat})}\label{tab:02}
\begin{tabular}{ccccccc} 
\toprule 
	 & \cite{2019Hat} & $M=3$ & $M=5$ & $M=7$ & $M=9$ & $M=11$ \\ 
\midrule 
$k_1$ & 0.44046 & 0.42763 & 0.43922 & 0.44019 & 0.44040 & 0.44046\\
$t_1$ & 1.30272 & 1.12868 & 1.27183 & 1.28673 & 1.29213 & 1.29488\\
$t_2$ & -1.42758 &  & -1.38715 & -1.40694 & -1.41403 &
-1.41760\\
\bottomrule 
\end{tabular}
\end{table}

We make some remarks on the second-order slip coefficients $k_2$. 
In Kramers' problem where $\partial/\partial x_i=0,\ i\neq 2,$
our boundary condition is 
\[
	u_{1} - u_{1}^w = {\sqrt{2}}k_0\varepsilon
		\pd{u_{1}}{x_2}+ 2k_2\varepsilon^2\pd{^2 u_{1}}{x_2^2}.
\]
Cercignani \cite{CC1989} has solved the LBE
between two parallel walls to get
\[
	u_{1} - u_{1}^w = C_1\pd{u_{1}}{x_2}- C_2\pd{^2 u_{1}}{x_2^2},
\]
where $C_1=\displaystyle\frac{2}{\sqrt{\pi}}*1.016\approx 1.146,
\ C_2=\frac{1}{\pi}+\frac{C_1^2}{2}\approx 0.975.$ If we choose 
$\varepsilon=\displaystyle\sqrt{\frac{2}{\pi}}$ and $k_0,\ k_2$
as above, we have 
$\sqrt{2}k_0\varepsilon\approx 1.146$ and $2k_2\varepsilon^2\approx
-0.976$, which agrees with Cercignani's result. 

Thus, we can conclude that all terms 
in \eqref{eq:Nsb1}-\eqref{eq:Nsb3} have appeared in the literature.
But the methodology and starting point are new. Due to
the discrete essence of the moment method, we can 
write the analytical expressions of the second-order
slip coefficients for a moderate $M$.

\begin{remark}
	The nonlinear theory seems feasible. We
	can choose the nonlinear moment equations as the globally 
	hyperbolic moment equations \cite{CaiFanLi} and perform Hilbert
	expansion on them. 
	
	The computation is routine and 
	we have checked that the zeroth order outer equations are 
	nonlinear compressible Euler equations, while the next order
	equations are linearized Euler equations around the zeroth 
	order solutions. The zeroth order viscous layer equations 
	are nonlinear nonlocal Prandtl boundary layer equations, while
	the next order equations are some linear equations. 
	The considered Knudsen layer equations are also linear. We 
	can utilize the solvability condition of linear half-space 
	problems again to determine slip BCs for the
	equations of outer solutions and viscous layer solutions. 
	These results agree with the Hilbert expansion of the Boltzmann
	equation \cite{Sone2007}.

	However, it's not trivial to retrieve the NS equations in
	the nonlinear case. The theoretical analysis is also more
	difficult. 
\end{remark}

\section{Couette flow: numerical verification}
\label{sec:ANE}
	In this section, we explore the velocity profile of the Couette
flow numerically. The gas is confined by two parallel plates as
in Section \ref{sec:3}. We assume the upper plate is at $x_2=1$
and the lower plate is at $x_2=0.$ At $t=0$, the gas is at rest.
Then the lower plate suddenly moves right with the velocity
$u^w_1(t,0)$. We assume the simulation time is relatively short 
and the gas at the upper plate is almost at rest. So we can 
impose the no-slip BCs at the upper plate and focus on the 
boundary-layer behavior around the lower plate.

The simplified NS equations for the Couette flow are 
\begin{gather*}
\pd{u_1}{t} = \varepsilon \gamma_1 \pd{^2u_1}{x_2^2},\\
	u_1(0,x) = 0,\\
u_1(t,1) = 0,\\
	u_1(t,0)-u^w_1(t,0) = \sqrt{2}k_0\varepsilon\pd{u_1}{x_2}(t,0) 
	+ 2k_2\varepsilon^2\pd{^2u_1}{x_2^2}(t,0),
\end{gather*}
where we let $\gamma_1=1$, $x_2\in[0,1]$ and $t\in[0,T]$.

We use an implicit finite difference method to solve this 
parabolic equation. Assume $x_i=ih$ where $0\leq i\leq N$ and
$h=1/N$, $t_n=n\Delta t$, $u_i^n:=u_1(t_n,x_i)$, then the numerical
scheme is 
\[
	\frac{u_i^{n+1}-u_i^n}{\Delta t} = \varepsilon\gamma_1
	\frac{u_{i+1}^{n+1}-2u_i^{n+1}+u_{i-1}^{n+1}}{h^2}.
\]
As for the boundary, the derivatives are approximated by
the one-sided difference.
It's well-known that the implicit scheme enlarges the 
feasible time step.

For the BGK model in the Couette flow, the linear moment equations are
\begin{gather*}
	\pd{W_c}{t} + A_c \pd{W_c}{x} = -\frac{1}{\varepsilon}Q_cW_c,\\
	W_c(0,x) = 0, \\
	W_c(t,1) = 0, \\
	B_cW_c(t,0) = b_c(t),
\end{gather*}
where $W_c,A_c,Q_c,B_c,b_c$ are defined in Section \ref{sec:3}.

This is a linear hyperbolic initial-boundary value problem with
constant coefficients. We use the upwind scheme to approximate
the convection term and implicitly deal with the source term:
\[
	\frac{W_i^{n+1}-W_i^n}{\Delta t} + \frac{\mathcal{F}_{i+1/2}^n
	-\mathcal{F}_{i-1/2}^n}{h} = -\frac{1}{\varepsilon}Q_cW_i^{n+1},	
\]
where $W_i^n:=W_c(t_n,x_i)$ and $\mathcal{F}_{i\pm 1/2}^n$ is the 
upwinding numerical flux \cite{Leveque1992numerical}. Assume the
decomposition $A_c=R\Lambda R^{-1}$ with $\Lambda=\Lambda^++\Lambda^-$
where $\Lambda^+$ is diagonal with positive entries and $\Lambda^-$
is diagonal with negative entries. Then the upwinding numerical
flux is defined as
\[\mathcal{F}_{i+1/2}^n = A_c^+ W_i^n+A_c^- W_{i+1}^n,\]
where $A_c^+=R\Lambda^+R^{-1}$ and $A_c^-=R\Lambda^-R^{-1}$.
We also deal with the boundary condition according to the 
characteristic information, which is illustrated in Section
\ref{sec:3}. The time step is determined by the CFL condition.

In the numerical test, we choose $N=10000$ and
\[ u^w_1(t,0) = 1-\cos(2\pi t).\]

\begin{figure}[!htb]
\pgfplotsset{width=0.45\textwidth}
\centering
\begin{tikzpicture} 
\begin{axis}[
	xlabel=$x_2/\sqrt{\varepsilon}$, 
    ylabel=$u_1$,
    xmax = 1.0,	
    tick align=outside, 
    legend style={at={(0.68,0.95)},anchor=north} 
    ]
\addplot[smooth,blue,thick] table {\dpath/eps1.0e-01_T0.10_MoM.dat};
\addlegendentry{MoM}
\addplot[smooth,dashed,black,thick] table 
	{\dpath/eps1.0e-01_T0.10_u1.dat};
\addlegendentry{no-slip}
	\addplot[smooth,purple,thick] table {\dpath/eps1.0e-01_T0.10_u2.dat};
\addlegendentry{$1$st-slip}
	\addplot[smooth,red,thick] table {\dpath/eps1.0e-01_T0.10_u3.dat};
\addlegendentry{$2$ed-slip}
\end{axis}
\end{tikzpicture}
\hskip 3pt
\begin{tikzpicture} 
\begin{axis}[
	xlabel=$x_2/\sqrt{\varepsilon}$, 
    ylabel=$u_1$,
    xmax = 1.0,	
    tick align=outside, 
    legend style={at={(0.68,0.95)},anchor=north} 
    ]
\addplot[smooth,blue,thick] table {\dpath/eps1.0e-01_T0.25_MoM.dat};
\addlegendentry{MoM}
\addplot[smooth,dashed,black,thick] table 
	{\dpath/eps1.0e-01_T0.25_u1.dat};
\addlegendentry{no-slip}
	\addplot[smooth,purple,thick] table {\dpath/eps1.0e-01_T0.25_u2.dat};
\addlegendentry{$1$st-slip}
	\addplot[smooth,red,thick] table {\dpath/eps1.0e-01_T0.25_u3.dat};
\addlegendentry{$2$ed-slip}
\end{axis}
\end{tikzpicture}

\caption{The velocity profile when $\varepsilon=0.1$. 
	Left: $t=0.1$. Right: $t=0.25$.}
	\label{fig:001}
\end{figure}
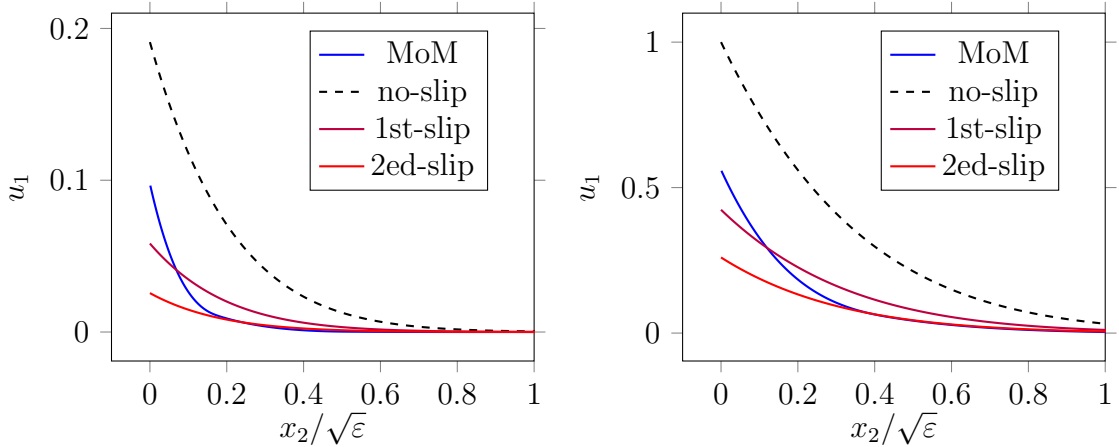

\begin{figure}[!htb]
\pgfplotsset{width=0.45\textwidth}
\centering
\begin{tikzpicture} 
\begin{axis}[
	xlabel=$x_2/\sqrt{\varepsilon}$, 
    ylabel=$u_1$,
    xmax = 1.0,	
    tick align=outside, 
    legend style={at={(0.68,0.95)},anchor=north} 
    ]
\addplot[smooth,blue,thick] table {\dpath/eps5.0e-02_T0.10_MoM.dat};
\addlegendentry{MoM}
\addplot[smooth,dashed,black,thick] table 
	{\dpath/eps5.0e-02_T0.10_u1.dat};
\addlegendentry{no-slip}
	\addplot[smooth,purple,thick] table {\dpath/eps5.0e-02_T0.10_u2.dat};
\addlegendentry{$1$st-slip}
	\addplot[smooth,red,thick] table {\dpath/eps5.0e-02_T0.10_u3.dat};
\addlegendentry{$2$ed-slip}
\end{axis}
\end{tikzpicture}
\hskip 3pt
\begin{tikzpicture} 
\begin{axis}[
	xlabel=$x_2/\sqrt{\varepsilon}$, 
    ylabel=$u_1$,
    xmax = 1.,	
    tick align=outside, 
    legend style={at={(0.68,0.95)},anchor=north} 
    ]
\addplot[smooth,blue,thick] table {\dpath/eps5.0e-02_T0.25_MoM.dat};
\addlegendentry{MoM}
\addplot[smooth,dashed,black,thick] table 
	{\dpath/eps5.0e-02_T0.25_u1.dat};
\addlegendentry{no-slip}
	\addplot[smooth,purple,thick] table {\dpath/eps5.0e-02_T0.25_u2.dat};
\addlegendentry{$1$st-slip}
	\addplot[smooth,red,thick] table {\dpath/eps5.0e-02_T0.25_u3.dat};
\addlegendentry{$2$ed-slip}
\end{axis}
\end{tikzpicture}

\caption{The velocity profile when $\varepsilon=0.05$. 
	Left: $t=0.1$. Right: $t=0.25$.}
	\label{fig:002}
\end{figure}

\begin{figure}[!htb]
\pgfplotsset{width=0.45\textwidth}
\centering
\begin{tikzpicture} 
\begin{axis}[
	xlabel=$x_2/\sqrt{\varepsilon}$, 
    ylabel=$u_1$,
    xmax = 1.0,	
    tick align=outside, 
    legend style={at={(0.68,0.95)},anchor=north} 
    ]
\addplot[smooth,blue,thick] table {\dpath/eps1.0e-03_T0.10_MoM.dat};
\addlegendentry{MoM}
\addplot[smooth,dashed,black,thick] table 
	{\dpath/eps1.0e-03_T0.10_u1.dat};
\addlegendentry{no-slip}
	\addplot[smooth,purple,thick] table {\dpath/eps1.0e-03_T0.10_u2.dat};
\addlegendentry{$1$st-slip}
	\addplot[smooth,red,thick] table {\dpath/eps1.0e-03_T0.10_u3.dat};
\addlegendentry{$2$ed-slip}
\end{axis}
\end{tikzpicture}
\hskip 3pt
\begin{tikzpicture} 
\begin{axis}[
	xlabel=$x_2/\sqrt{\varepsilon}$, 
    ylabel=$u_1$,
    xmax = 1.0,	
    tick align=outside, 
    legend style={at={(0.68,0.95)},anchor=north} 
    ]
\addplot[smooth,blue,thick] table {\dpath/eps1.0e-03_T0.25_MoM.dat};
\addlegendentry{MoM}
\addplot[smooth,dashed,black,thick] table 
	{\dpath/eps1.0e-03_T0.25_u1.dat};
\addlegendentry{no-slip}
	\addplot[smooth,purple,thick] table {\dpath/eps1.0e-03_T0.25_u2.dat};
\addlegendentry{$1$st-slip}
	\addplot[smooth,red,thick] table {\dpath/eps1.0e-03_T0.25_u3.dat};
\addlegendentry{$2$ed-slip}
\end{axis}
\end{tikzpicture}

\caption{The velocity profile when $\varepsilon=0.001$. 
	Left: $t=0.1$. Right: $t=0.25$.}
	\label{fig:003}
\end{figure}

\begin{figure}[!htb]
\pgfplotsset{width=0.45\textwidth}
\centering
\begin{tikzpicture} 
\begin{axis}[
	xlabel=$-\log_2\varepsilon$, 
    ylabel=$\log_2 u_e$,
    tick align=outside, 
    legend style={at={(0.68,0.95)},anchor=north} 
    ]
\addplot[smooth,blue,thick] table {\dpath/err23_T0.10.dat};
	\addlegendentry{+$2$ed-slip}
\addplot[smooth,dashed,red,thick] table {\dpath/err12_T0.10.dat};
	\addlegendentry{+$1$st-slip}
\end{axis}
\end{tikzpicture}
\hskip 3pt
\begin{tikzpicture} 
\begin{axis}[
	xlabel=$-\log_2\varepsilon$, 
    ylabel=$\log_2 u_e$,
    tick align=outside, 
    legend style={at={(0.68,0.95)},anchor=north} 
    ]
\addplot[smooth,blue,thick] table {\dpath/err23_T0.25.dat};
	\addlegendentry{+$2$ed-slip}
\addplot[smooth,dashed,red,thick] table {\dpath/err12_T0.25.dat};
	\addlegendentry{+$1$st-slip}
\end{axis}
\end{tikzpicture}

\caption{The log-log diagram of the velocity errors. Left: $t=0.1$.
	Right: $t=0.25$.}
	\label{fig:004}
\end{figure}
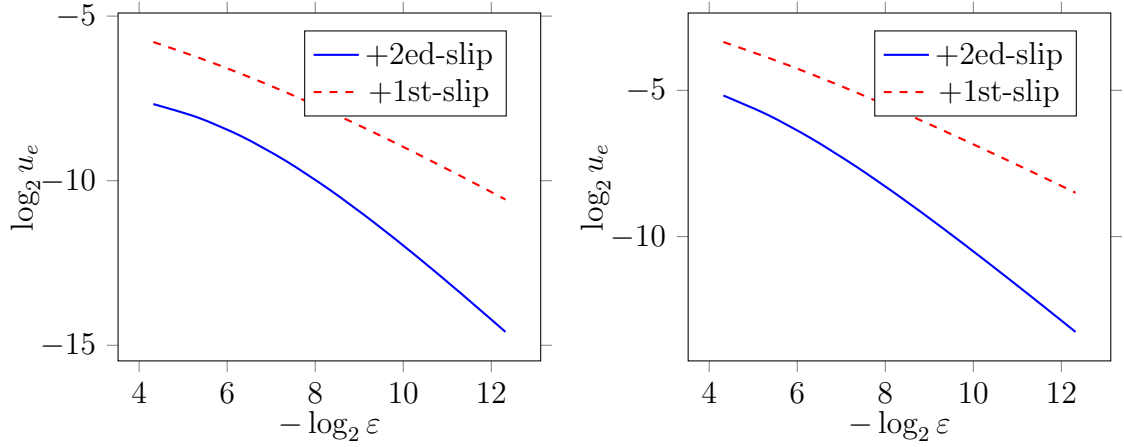

Fig.\ref{fig:001}\ -\ Fig.\ref{fig:003} show the velocity profile
when $\varepsilon=0.1,\ 0.05$ and $0.001$. In these figures, 
the $x$-axis is $x_2/\sqrt{\varepsilon}$, which means
zooming in to observe the solution behavior near the boundary. 
The label ``no-slip'' represents the solution of the NS
equations with $k_0=0$ and $k_2=0,$ while the label ``1st-slip'' 
as well as ``2ed-slip'' individually corresponds to
$k_0=1.01619,\ k_2=0$ and $k_0=1.01619,\ k_2=-0.76632.$ The label
``MoM'' represents the solution of the moment equations when $M=8$.

From Fig.\ref{fig:001}, we can see that the ``MoM'' solution deviates
from the NS solutions when $x_2$ is close to zero, and agrees with 
the ``2ed-slip'' solution when $x_2$ has a distance from zero.
The phenomena coincide with the theory: firstly, the NS equations can
not capture the Knudsen layer but the moment equations can, so when
$x_2=O(\varepsilon)$, the ``MoM''
solution would deviate from the NS solutions. Secondly, when
$x_2=O(\sqrt{\varepsilon})$, the moment equations can capture 
the viscous layer, which is also well approximated by the NS equations
with second-order BCs. So the ``MoM'' solution 
agrees with the ``2ed-slip'' solution when $x_2=O(\sqrt{\varepsilon})$.

From Fig.\ref{fig:001}\ -\ Fig.\ref{fig:003}, we 
can see that when $\varepsilon$ goes smaller, the difference 
between the ``1st-slip'' and ``2ed-slip'' solutions tends to
vanish. To show the error between NS solutions with different 
BCs, we exhibit Fig.\ref{fig:004}.

In Fig.\ref{fig:004}, the label ``+1st-slip'' means the $L^2$ error
of ``no-slip'' solution and ``1st-slip'' solution while the label
``+2ed-slip'' representing the error between ``1st-slip'' and
``2ed-slip'' solutions. We can see that the ``+1st-slip'' error
roughly has a convergence rate of the order $1/2$ while the
``+2ed-slip'' error is about first order. The fact implies that
if the $L^2$ error between the viscous layer solution of the moment
equations and the ``2ed-slip'' solution is $o(\varepsilon)$, 
then the error between the moment solution and the ``1st-slip'' solution
should be at most $O(\varepsilon)$.
Thus, for instantaneous flows, the second-order slip 
BCs for the NS equations are necessary to obtain a 
first-order approximation solution to the moment equations.

In conclusion, the well-designed numerical example of the unsteady 
Couette flow verifies the theoretical results in Section \ref{sec:3}.

\section*{Acknowledgment}
\textbf{Funding:}
This work is supported by the National Key R\&D
Program of China, Project Number 2020YFA0712000 and the China Postdoctoral Science Foundation, Project Number 2021M700002. \\
\textbf{Data Availibility:}
Data sharing not applicable to this article as no datasets were generated or analysed during the current study.\\
\textbf{Conflict of interest:} The authors have no financial or proprietary interests in any material discussed in this article.


\begin{appendices}
\section{Derivation of the Grad BCs}
	\label{app:G}
	Multiplying the Maxwell BC \eqref{eq:Maxwell} by
	$\xi_2\phi_{\+\alpha}$ with $\+\alpha\in\mathbb{I}_e,
	\ |\+\alpha|\leq M-1,$ and integrating over $\+\xi\in \mathbb{R}^3$, 
	we have 
 \[
	 \int_{\bbR^2}\!\!\int_0^{+\infty}\!\!\xi_2
	 \phi_{\+\alpha}f(t,\+x,\+\xi)\,\mathrm{d}\+\xi = 
	 \chi\int_{\bbR^2}\!\!\int_0^{+\infty}\!\!\xi_2
	 \phi_{\+\alpha}f^w(t,\+x,\+\xi)\,\mathrm{d}\+\xi +
	 (1-\chi)\int_{\bbR^2}\!\!\int_0^{+\infty}\!\!\xi_2
	 \phi_{\+\alpha}f(t,\+x,\+\xi^*)\,\mathrm{d}\+\xi.
 \]
	Then we will substitute $f=\mathcal{M}\sum_{|\+\alpha|\leq M}
	w_{\+\alpha}\phi_{\+\alpha}$ into the above relation.

	In the process, the half-space integral is transformed into
	the whole space integral according to the even-odd parity.
	For example, when $\alpha_2$ is even and $\beta_2$ is odd,
	the polynomial $\xi_2\phi_{\+\alpha}\phi_{\+\beta}$ is even
	and we have
\begin{eqnarray*}
	\int_{\bbR^2}\!\!\int_0^{+\infty}\!\!\mathcal{M}\xi_2
	 \phi_{\+\alpha}\phi_{\+\beta}\,\mathrm{d}\+\xi &=&
	\frac{1}{2}\ang{\mathcal{M}\xi_2\phi_{\+\alpha}\phi_{\+\beta}}.
\end{eqnarray*}
	When $\alpha_2$ and $\beta_2$ are both even, the polynomial
	$\xi_2\phi_{\+\alpha}\phi_{\+\beta}$ is odd and we have
\begin{eqnarray*}
	\int_{\bbR^2}\!\!\int_0^{+\infty}\!\!\mathcal{M}\xi_2
	 \phi_{\+\alpha}\phi_{\+\beta}\,\mathrm{d}\+\xi &=&
	\frac{1}{2}\ang{\mathcal{M}|\xi_2|\phi_{\+\alpha}\phi_{\+\beta}}.
\end{eqnarray*}
	Note that $\+\xi^*=(\xi_1,-\xi_2,\xi_3)$. We also calculate
	\[ b_{\+\alpha}=\ang{f^w\phi_{\+\alpha}},\]
	and substitute $f^w=\mathcal{M}\sum_{|\+\alpha|\leq M}b_{\+\alpha}
	\phi_{\+\alpha}$ into the above relation. According to
	the calculation in \cite{CaiLiQiao}, we have
	\[b_{\+0}=\rho^w,\ b_{\+e_i}=u_i^w,\ b_{2\+e_i}=\theta^w/\sqrt{2},
	\text{\ otherwise}\ b_{\+\alpha}=0.\]

	After the above manipulation, the Grad BCs read as
\begin{equation}\label{eq:rbc1}
	\left(1-\frac{\chi}{2}\right)\sum_{\+\beta\in\mathbb{I}_o}\ang
	{\xi_2\mathcal{M}\phi_{\+\alpha}\phi_{\+\beta}} w_{\+\beta}
	= -\frac{\chi}{2}\sum_{\+\beta\in\mathbb{I}_e}
	\ang{|\xi_2|\mathcal{M}
	\phi_{\+\alpha}\phi_{\+\beta}}(w_{\+\beta}-b_{\+\beta}),
\end{equation}
where $\+\alpha\in\mathbb{I}_e$ and $|\+\alpha|\leq M-1$.
To rewrite \eqref{eq:rbc1} into a matrix form, we further
define two mappings 
$$
\mathcal{N}_1:\mathbb{I}_e\rightarrow \{1,2,...,m\},\qquad
\mathcal{N}_2:\mathbb{I}_o\rightarrow\{1,2,...,n\}
$$ 
by the relation 
\begin{align*}
&\mathcal{N}_1(\+\alpha) < \mathcal{N}_1(\+\beta)
	\quad\Leftrightarrow\quad 
	\mathcal{N}(\+\alpha) < \mathcal{N}(\+\beta),
	\quad \text{for}~\+\alpha,\+\beta\in \mathbb{I}_e, \\[2mm]
&\mathcal{N}_2(\+\alpha) < \mathcal{N}_2(\+\beta)
	\quad\Leftrightarrow\quad 
	\mathcal{N}(\+\alpha) < \mathcal{N}(\+\beta),
	\quad \text{for}~\+\alpha,\+\beta\in \mathbb{I}_o. 
\end{align*} 
Having these, we define $M_o\in\bbR^{m\times n}$ and
$S\in\bbR^{m\times m}$ by
\begin{gather*}
M_o[\mathcal{N}_1(\+\alpha),\mathcal{N}_2(\+\beta)] = \ang{\xi_2\mathcal{M}\phi_{\+\alpha}
\phi_{\+\beta}},\quad \+\alpha\in\mathbb{I}_e,\ \+\beta\in
\mathbb{I}_o.\\[2mm]
S[\mathcal{N}_1(\+\alpha),\mathcal{N}_1(\+\beta)] = \frac{\sqrt{2\pi}}{2}
\ang{|\xi_2|\mathcal{M}\phi_{\+\alpha}
\phi_{\+\beta}},\quad \+\alpha,\ \+\beta\in\mathbb{I}_e.
\end{gather*}
Then Grad BCs can be written as 
\begin{equation*}
	E[\hat{\chi}S,M_o](W-b)=0,
\end{equation*}
where $E\in\bbR^{n\times m}$ with
\[ E[\mathcal{N}_2(\+\alpha),\mathcal{N}_1(\+\beta)]
=\delta_{\+\alpha,\+\beta+\+e_2},\quad \+\alpha\in\mathbb{I}_o,
\ \+\beta\in\mathbb{I}_e.\]

\section{Details of the asymptotic analysis}
	\label{app:A}
\subsection{Outer solution}
	The choices of $G$ and $H$ in Section \ref{sec:401} are not
	unique. Apparently, the different choices will give equivalent
	equations in the linear case. Here we will give a specific choice
	of $G$ and $H$.

	Due to the collision invariants of the linearized Boltzmann
	operator \cite{CC1989}, the null space of $Q$ always has a
	constant dimension for any moment order $M\geq 3$, i.e.,
	\[p=\mathrm{dim}\ \mathrm{Null}\{Q\}=5,\]
	which corresponds to the conservation of mass, momentum and
	energy. Thus, here and hereafter, we choose $G\in\bbR^{N\times p}$
	as 
	\[G=[\varphi_0,\varphi_1,\varphi_2,\varphi_3,\varphi_4],\]
	where $\varphi_i\in\bbR^N$ is given with the non-zero entries
	\begin{eqnarray*}
		\varphi_0[\mathcal{N}(\+0)] &=& 1,\\
		\varphi_i[\mathcal{N}(\+e_i)] &=& 1,\quad i=1,2,3,\\
		\varphi_4[\mathcal{N}(2\+e_i)] &=& \sqrt{3}/3.
	\end{eqnarray*}
	Incidentally, we can define
	\[G_e=[\varphi_0,\varphi_1,\varphi_3,\varphi_4].\]

	The equilibrium variables $G^TW$ is 
	\[
		G^TW = \left[\rho,u_1,u_2,u_3,
		\displaystyle\frac{\sqrt{6}}{2}\theta\right].
	\]	
	Due to the structure of $G$, we can choose $H\in\bbR^{N\times(N-p)}$ 
	such that the columns of $H$ are all unit vectors with only 
	one component being one except two columns, i.e.,
	\[
		H[\mathcal{N}(\+\alpha),\mathcal{N}(\+\beta)-5]	= 
		\delta_{\+\alpha,\+\beta},\quad |\+\beta|>1,\ 
		\+\beta\neq 2\+e_1,2\+e_2,2\+e_3,
	\]
	and
	\begin{eqnarray*}
		H[\mathcal{N}(\+\alpha),\mathcal{N}(2\+e_2)-5]	= 
		\delta_{\+\alpha,2\+e_1}\frac{\sqrt{3}}{3} +
		\delta_{\+\alpha,2\+e_2}\frac{-3-\sqrt{3}}{6} +
		\delta_{\+\alpha,2\+e_3}\frac{3-\sqrt{3}}{6},\\
		H[\mathcal{N}(\+\alpha),\mathcal{N}(2\+e_3)-5]	= 
		\delta_{\+\alpha,2\+e_1}\frac{\sqrt{3}}{3} +
		\delta_{\+\alpha,2\+e_2}\frac{3-\sqrt{3}}{6} +
		\delta_{\+\alpha,2\+e_3}\frac{-3-\sqrt{3}}{6}.
	\end{eqnarray*}

	In the derivation, we multiply \eqref{eq:os} left by $G^T$, 
	where the right hand side item would be zero because
	\[ G^TQ = (QG)^T = 0.\]
	Multiply \eqref{eq:os} left by $H^T$, and we have
	\[ 0 = H^TQ\os{W}{0} = H^TQ(GG^T+HH^T)\os{W}{0} 
		= H^TQH(H^T\os{W}{0}).\]
	Since $H^TQH>0$, this formula implies that 
	\[H^T\os{W}{0}=0,\]
	which leads to $\os{\sigma}{0}{id}=0$ and $\os{q}{0}{d}=0$.
	Analogously, we have $H^T\os{W}{1}=0$. The general
	iteration is \eqref{eq:osnc}, which is similar as the 
	Maxwell iteration of moment equations \cite{Rein1990}. 
	Utilizing the symmetry properties of $Q$,
	one can get the constants in \eqref{eq:cc01}\eqref{eq:cc02} as
	\begin{eqnarray}\label{eq:cons0}
		\gamma_1 = r_{12}^T(H^TQH)^{-1}r_{12},\\ 
		\gamma_2 = \frac{1}{5}s_1^T(H^TQH)^{-1}s_1,\label{eq:cons1}
	\end{eqnarray}
where non-zero entries of $r_{id}\in\bbR^{N-p}$ and
$s_d\in\bbR^{N-p},\ d=1,2,3,$ are
\begin{gather*}
	r_{id}[\mathcal{N}(\+e_i+\+e_d)-5]=1,\
	i\neq 1\text{\ or\ }d\neq1,\\ 
	s_d[\mathcal{N}(3\+e_d)-5]=\sqrt{3/2},\ 
	s_d[\mathcal{N}(\+e_d+2\+e_i)-5]=\sqrt{1/2},\ i\neq d.
\end{gather*}
More details about $\gamma_1$ and $\gamma_2$ can be found in
\cite{Rein1990}, which compares the results of the moment equations
and Boltzmann equation.

\subsection{Viscous layer solution}
Multiplying \eqref{eq:vs} left by $H^T,$ we have
\begin{eqnarray*}
H^T\vs{W}{0}&=&0,\\
-(H^TQH)H^T\vs{W}{1} &=& H^TA_2(GG^T+HH^T)\pd{\vs{W}{0}}{y} \\
					&=& H^TA_2GG^T\pd{\vs{W}{0}}{y},\\
	-(H^TQH)H^T\vs{W}{j+2} &=& 
	H^T\left(\pd{\vs{W}{j}}{t} +\sum_{d\neq
		2}A_d\pd{\vs{W}{j}}{x_d}\right)
	+ H^TA_2\pd{\vs{W}{j+1}}{y} \\
	&=& H^T\sum_{d\neq 2}A_dGG^T\pd{\vs{W}{j}}{x_d} + 
	H^TA_2(GG^T+HH^T)\pd{\vs{W}{j+1}}{y},\quad j\geq 0.
\end{eqnarray*}
The first formula shows that $\vs{\sigma}{0}{id}=\vs{q}{0}{d}=0.$ 
From the second formula, the non-equilibrium variable $H^T\vs{W}{1}$
is represented by derivatives of $G^T\vs{W}{0}$. From the third
formula, the variable $H^T\vs{W}{j+2}$ can be represented by 
derivatives of $G^T\vs{W}{j+1},\ G^T\vs{W}{j}$ and $H^T\vs{W}{j+1}$.
By induction, we conclude that $H^T\vs{W}{j}$ can be represented
by derivatives of $G^T\vs{W}{s},\ s<j.$

For example, $\vs{\sigma}{2}{11}$ can be represented by the
linear combination of components of $H^T\vs{W}{2}$. Since all
matrices are known, with the aid of the computer algebra system,
we can calculate that
\[
	\vs{\sigma}{2}{11} = -\gamma_1\left(2\pd{\vs{u}{0}{1}}{x_1}
			-\frac{2}{3}
			\left(\pd{\vs{u}{0}{1}}{x_1}+\pd{\vs{u}{0}{3}}{x_3}+
			\pd{\vs{u}{1}{2}}{y}\right)\right)
		 - \frac{2}{3}\gamma_3
		\pd{^2\vs{\theta}{0}}{y^2},\ 
\]
where the constant
\begin{equation}
	\label{eq:Agam3}
	\gamma_3 = r_{12}^T(H^TQH)^{-1}(H^TA_2H)(H^TQH)^{-1}s_1.
\end{equation}
Due to the symmetry of $Q$, the same $\gamma_3$ appears in
other relations \cite{Sone2007}, e.g., $\vs{\sigma}{2}{id}$.

\subsection{Knudsen layer solution}
The Knudsen layer solution satisfies the linear half-space problem
\begin{eqnarray*}
	A_2\pd{\ks{W}{j}}{z} = -Q\ks{W}{j}-\pd{\ks{W}{j-2}}{t}
	-\sum_{d\neq 2}\pd{\ks{W}{j-2}}{x_d},\quad z\geq 0,
\end{eqnarray*}
where $\ks{W}{j}$ vanishes at $z=+\infty.$ Bobylev and Bernhoff
\cite{Bern2008} have studied the number of positive, negative and zero
eigenvalues of $A_2^{-1}Q$ when $A_2$ is invertible. They also
extend the result for singular matrices $A_2$ when $\mathrm{Null}(A_2)
\cap\mathrm{Null}(Q)=\{0\}$.

In the Grad moment equations, the matrix $A_2$ has a special block
structure which is called the Onsager matrix \cite{Sarna2018} by 
some authors, i.e.,
\[A_2=\begin{bmatrix} 0 & M_o \\ M_o^T & 0 \end{bmatrix},\]
where $M_o\in\bbR^{m\times n}$ is of full column rank.
It's easy to see that
	\[\begin{bmatrix} 0 & M_o \\ M_o^T & 0 \end{bmatrix}
		\begin{bmatrix} x \\ y \end{bmatrix} = \lambda
	\begin{bmatrix} x \\ y \end{bmatrix} \quad\Rightarrow\quad
	\begin{bmatrix} 0 & M_o \\ M_o^T & 0 \end{bmatrix}
	   	\begin{bmatrix} x \\ -y \end{bmatrix}
	   	= -\lambda \begin{bmatrix} x \\ -y \end{bmatrix}.\]
So $A_2$ has $n$ positive eigenvalues, $n$ negative eigenvalues and
$m-n$ zero eigenvalues. The matrix $Q$ can write as
\[Q=\begin{bmatrix} Q_e & \\ & Q_o \end{bmatrix},\]
where $Q_e\in\bbR^{m\times m}$ and $Q_o\in\bbR^{n\times n}$.
We can check that for the linear Grad moment equations, 
\[\mathrm{dim}\ \mathrm{Null}(Q_e) = 4.\]
Utilizing Bobylev and Bernhoff's result or our specific version for the
moment equations \cite{Yang2022a}, we can verify that the half-space
problem need $n-4$ boundary conditions at $z=0.$

\section{Numerical method of the half-space problem}
\label{app:C}
Now we show the numerical method to solve the half-space problem
\begin{eqnarray}\label{eq:HS1}
	\begin{aligned}
		A_2\pd{\ks{W}}{z} = -Q\ks{W},& \quad \ks{W}=\ks{W}(z),\ z\geq 0, \\
		B(\ks{W}(0)-h) = 0,&\quad \ks{W}(\infty) = 0,
	\end{aligned}
\end{eqnarray}
where the matrices $A_2, Q$ and $B$ are given by
\eqref{eq:defofAQ} and \eqref{eq:GBC}.

The numerical method is briefly stated as follows. Since $A_2$
is symmetric and $Q$ is symmetric positive semi-definite, we
can solve a generalized eigenvalue problem to get
\[ A_2 x_i = \lambda_i Q x_i,\ x_i\in\bbR^N,\ \lambda_i\in\bbR\cup
\{\infty\},\ i=1,2,...,N.\]
Because $\ks{W}$ vanishes at infinity, the characteristic variables
corresponding to non-positive eigenvalues should be zero, i.e.,
\[ x_i^TQ\ks{W} = 0,\ \lambda_i\leq 0 \ \text{or}\ \lambda_i=\infty.\]
The above formula gives a relation between components of $\ks{W}$, i.e.,
only part of components of $\ks{W}$ is independent.

Substitute the relation into BCs and write $h$ as
$h=h_+ +h_-$, where $h_-$ represents the given driven term, then
we may solve a linear algebraic system to determine $h_+$.
The solvability of this problem is ensured by Theorem \ref{thm:41}
when $h_-$ is appropriately chosen.
More details about the numerical method can be found in
\cite{Yang2022a}.

\end{appendices}

\bibliographystyle{abbrv}
\bibliography{ref}

\end{document}